\newcommand{\ZZ}{{\mathbb Z}}
\newcommand{\EE }{{\mathbb E}}
\newcommand{\PP}{{\mathbb P}}
\newcommand{\dsum}{\displaystyle\sum}
\newcommand{\e}{\mathbf{e}}
\newtheorem{theorem}{Theorem}[section]
\newtheorem{corollary}[theorem]{Corollary}
\newtheorem{lemma}[theorem]{Lemma}
\newtheorem{remark}[theorem]{Remark}
\numberwithin{equation}{section}
\renewcommand\section{\@startsection {section}{1}{\z@}%
    {-3.5ex \@plus -1ex \@minus -.2ex}%
    {2.3ex \@plus.2ex}%
    {\normalfont\fontsize{18}{19}\bfseries}}
\renewcommand\subsection{\@startsection {subsection}{1}{\z@}%
    {-1.5ex \@plus -1ex \@minus -.2ex}%
    {1.3ex \@plus.2ex}%
    {\normalfont\fontsize{13}{14}\bfseries}}
\newcommand\xleftrightarrow[2][]{%
  \ext@arrow 9999{\longleftrightarrowfill@}{#1}{#2}}
\newcommand\longleftrightarrowfill@{%
  \arrowfill@\leftarrow\relbar\rightarrow}
\title{\bf Percolation of finite clusters and shielded paths}
\author{Bounghun Bock\thanks{School of Mathematics, Georgia Institute of Technology, Atlanta, GA 30332 USA}
\hspace{.5cm} Michael Damron\thanks{School of Mathematics, Georgia Institute of Technology, Atlanta, GA 30332 USA}
\and C. M. Newman\thanks{Courant Institute of Mathematical Sciences, New York, NY
10012 USA, and NYU-ECNU Institute of Mathematical Sciences at NYU
Shanghai, 3663 Zhongshan Road North, Shanghai 200062, China}
\hspace{.5cm} Vladas Sidoravicius\thanks{NYU-ECNU Institute of Mathematical Sciences at NYU
Shanghai, 3663 Zhongshan Road North, Shanghai 200062, China}}
\begin{document}

\baselineskip20pt

\maketitle

\begin{abstract}
In independent bond percolation on $\mathbb{Z}^d$ with parameter $p$, if one removes the vertices of the infinite cluster (and incident edges), for which values of $p$ does the remaining graph contain an infinite connected component? Grimmett-Holroyd-Kozma used the triangle condition to show that for $d \geq 19$, the set of such $p$ contains values strictly larger than the percolation threshold $p_c$. With the work of Fitzner-van der Hofstad, this has been reduced to $d \geq 11$. We improve this result by showing that for $d \geq 10$ and some $p>p_c$, there are infinite paths consisting of ``shielded'' vertices --- vertices all whose adjacent edges are closed --- which must be in the complement of the infinite cluster. Using values of $p_c$ obtained from computer simulations, this bound can be reduced to $d \geq 7$. Our methods are elementary and do not require the triangle condition.
\end{abstract}

\section{Introduction}

In bond percolation, we declare each edge $e$ in the set $\mathcal{E}^d$ of nearest-neighbor edges of $\mathbb{Z}^d$ to be open or closed with probability $p$ or $1-p$, independently of each other. The resulting product measure $\mathbb{P}_p$ on the space $\{0,1\}^{\mathcal{E}^d}$ then has $\mathbb{P}_p(\omega(e) = 1) = p = 1-\mathbb{P}_p(\omega(e)=0)$ for all $e$, and edges $e$ with $\omega(e)=1$ (respectively, 0) we call open (respectively, closed). The main object of study in bond percolation is the connectivity of open clusters, in particular whether there are infinite open clusters. Specifically, two vertices $x$ and $y$ are said to be connected by an open path (written $x\to y$) if there is a path (an alternating sequence of vertices and edges $v_0,e_0, v_1, \dots, e_{n-1},v_n$ such that $e_i$ has endpoints $v_i$ and $v_{i+1}$) from $x$ to $y$ whose edges are open. The (open) clusters in an outcome $\omega$ are the connected components of $\mathbb{Z}^d$ under the equivalence relation $x \to y$. If one defines
\[
p_c = p_c(d) = \inf\left\{ p \in [0,1] : \mathbb{P}_p(\text{there is an infinite cluster})>0\right\},
\]
then one can show (see for example \cite[Theorem~1.10]{grimmett_percolation}) that $p_c \in (0,1)$ for all $d \geq 2$ and $p_c(d) \sim \frac{1}{2d}$ as $d \to \infty$ (see \cite{MR1136447, MR1064563}).

A natural question for $p>p_c$ is to determine the geometric properties of infinite clusters. It is known (see for example \cite[Theorem~8.1]{grimmett_percolation}) that a.s., there is a unique infinite cluster and its asymptotic density is $\theta(p) = \mathbb{P}_p(0 \text{ is in an infinite cluster})>0$. In this paper, following Grimmett-Holroyd-Kozma \cite{MR3177869}, we study the complement of the infinite cluster. Let $X$ be the subgraph of $\mathbb{Z}^d$ obtained after removing all vertices in the infinite cluster. The complementary critical value, $p_{fin}$, is defined as
\[
p_{fin} = p_{fin}(d) = \sup\{ p \in [0,1] : \mathbb{P}_p(X \text{ has an infinite connected component})>0\}.
\]
In dimension $d=2$, it is known that $p_c = 1/2$ \cite{MR575895} and that for each $p>p_c$, the infinite cluster contains infinitely many circuits (paths whose initial and final points coincide) around the origin. This implies that $p_{fin}(2) \leq p_c(2)$. Because the definition of $p_{fin}(d)$ implies
\begin{equation}\label{eq: p_fin_trivial_bound}
p_{fin}(d) \geq p_c(d) \text{ for all }d,
\end{equation}
we obtain $p_{fin}(2) = 1/2$.

Due to \eqref{eq: p_fin_trivial_bound}, one is led to ask: for which $d$ do we have $p_{fin}(d) > p_c(d)$? It is natural to believe that this is true for large $d$ because $\theta(p_c)=0$ (see for example \cite[Section~10.3]{grimmett_percolation}) and so for $p = p_c +\epsilon$ and $\epsilon>0$ small, one expects an infinite cluster with small asymptotic density whose removal is likely to leave much of $\mathbb{Z}^d$ intact. The inequality $p_c(d)  < p_{fin}(d)$ for $d \geq 19$ was proved by Grimmett-Holroyd-Kozma in \cite{MR3177869} using the triangle condition \cite{MR762034}. Later, Fitzner-van der Hofstad verified the triangle condition for $d \geq 11$ \cite{MR3646069}, so
\begin{equation}\label{eq: GHK_result}
p_c(d) <p_{fin}(d) \text{ for } d \geq 11. 
\end{equation}

We will develop a different approach to $p_{fin}$ involving ``shielded percolation.'' We call the vertex $x$ shielded if all edges incident to $x$ are closed. A path whose vertices are shielded is called a shielded path. We define the shielded critical probability as
\[
p_{shield} := \sup \{ p \in [0,1] : \PP_p(\text{$\exists$ an infinite shielded path}) >0 \}.
\]
There a.s. exists an infinite shielded path if $p<p_{shield}$. Furthermore, by the definition of the critical shielded probability, if $p < p_{shield}$, then there exists an infinite connected component in $X$. Thus, for any $d$,
\begin{equation} 
\begin{split}
p_{shield}(d)\leq p_{fin}(d).
\end{split}
\end{equation}
By giving lower bounds on $p_{shield}$, we therefore obtain them for $p_{fin}$. Our goals in this paper are two-fold. One is to improve the Grimmett-Holroyd-Kozma result \eqref{eq: GHK_result} from $d \geq 11$ to $d \geq 10$, but perhaps more importantly, our arguments, using shielded percolation, are much more elementary and not based on the triangle condition. Furthermore, using numerical values of $p_c$ from \cite{MR1976824, 2018arXiv180608067M}, we will also verify that \eqref{eq: GHK_result} should hold for all $d \geq 7$. (See the comments below Corollary~\ref{p_{shield >p_c} by numerical} and numerical estimates in the appendix.)

The idea for proving lower bounds for $p_{shield}$ is adapted from Cox-Durrett \cite{MR684285}, in their study on the asymptotics of the threshold for oriented percolation. (In that paper, the idea is attributed to Kesten.) One shows that for certain values of $p$, the number of open oriented paths from 0 to distant hyperplanes has uniformly positive mean, and suitably bounded second moment. The Paley-Zygmund inequality then implies that there are oriented infinite clusters for such $p$. In running a version of this argument for shielded paths, we obtain the existence of infinite oriented shielded paths for certain values of $p$. Because the oriented shielded value is smaller than $p_{shield}$, it is conceivable that more sophisticated lower bounds for $p_{shield}$ would reduce the dimensions (10 and 7) in our results.

\subsection{Main results}

We begin with an explicit upper bound for $p_{shield}$. Let $\lambda(d)$ be the connective constant for vertex self-avoiding walks on $\mathbb{Z}^d$. It is defined (by sub-multiplicativity) as 
\[
\lambda(d) = \lim_{n \to\infty} \left( \#\{\text{vertex self-avoiding paths with } n \text{ vertices, started at }0\}\right)^{1/n}.
\]

\begin{theorem} \label{upperbound of shield} For any $d \geq 1$,
\[
p_{shield} (d)\leq   1- \lambda(d)^{-\frac{1}{2d-1}}.
\]
\end{theorem}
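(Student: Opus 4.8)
The plan is to bound $p_{shield}$ from above by exhibiting, for $p$ above the claimed threshold, an exponential decay for the probability that a fixed long path is shielded, and then summing over all self-avoiding paths via a union bound. First I would observe that if an infinite shielded path exists with positive probability, then (by a standard argument using the fact that being shielded is a decreasing event and invoking translation invariance / Kolmogorov's zero--one law, or simply noting that $0$ is on an infinite shielded path with positive probability) there must exist, for every $n$, a self-avoiding shielded path of length $n$ starting from the origin with probability bounded below — or at least the expected number of such paths must not decay. So it suffices to show that the expected number of self-avoiding shielded paths of length $n$ from $0$ tends to $0$ when $p$ is large enough.

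Next I would compute the probability that a fixed self-avoiding path $\gamma = (v_0, v_1, \ldots, v_n)$ is shielded. A vertex $v_i$ is shielded iff all $2d$ edges incident to it are closed. The subtlety is that consecutive vertices on the path share an edge, and more generally the set of edges incident to the vertex set of $\gamma$ must all be closed; the number of such edges is what controls the probability. For a self-avoiding path with $n+1$ vertices, the vertices of $\gamma$ are incident to $2d(n+1)$ edge-slots, but each of the $n$ edges of $\gamma$ itself is counted twice, so the number of distinct edges incident to $\{v_0,\ldots,v_n\}$ is at most $2d(n+1) - n$. Hence the probability that $\gamma$ is shielded is at most $(1-p)^{2d(n+1)-n} = (1-p)^{(2d-1)n + 2d}$. (One should double-check whether further edges get double-counted — they do not for a self-avoiding path, since two non-consecutive vertices of a SAW are never adjacent... actually they can be adjacent in $\ZZ^d$ even if non-consecutive on the path, which only helps by reducing the edge count further, so the bound $(1-p)^{(2d-1)n+2d}$ still holds as an upper bound.)

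Then I would apply the union bound: the expected number of self-avoiding shielded paths of length $n$ from $0$ is at most
\[
\#\{\text{SAWs with } n+1 \text{ vertices from } 0\} \cdot (1-p)^{(2d-1)n + 2d}.
\]
By the definition of the connective constant, the count is $\lambda(d)^{n(1+o(1))}$, so the product behaves like $\big(\lambda(d)\,(1-p)^{2d-1}\big)^{n(1+o(1))}$. This tends to $0$ as $n \to \infty$ precisely when $\lambda(d)(1-p)^{2d-1} < 1$, i.e. when $1-p < \lambda(d)^{-1/(2d-1)}$, i.e. $p > 1 - \lambda(d)^{-1/(2d-1)}$. For such $p$, there is a.s.\ no infinite shielded path, hence $p_{shield}(d) \le 1 - \lambda(d)^{-1/(2d-1)}$.

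The main obstacle — really the only place requiring care — is the first step: translating "no infinite shielded path a.s." into a statement about the decay of the expected number of finite shielded self-avoiding paths. The cleanest route is: if $\PP_p(\exists \text{ infinite shielded path}) > 0$, then by ergodicity this probability is $1$, and then $\PP_p(0 \text{ lies on an infinite shielded path}) > 0$; on that event there is for every $n$ a self-avoiding shielded path from $0$ of length $\ge n$ (take an initial segment of the infinite path, pruning cycles — note pruning preserves the shielded property since it only deletes vertices), so the expected number of shielded SAWs from $0$ of length $n$ is at least $\PP_p(0 \text{ on infinite shielded path}) > 0$ for all $n$, contradicting the fact that it tends to $0$. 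I would also need to confirm that an infinite shielded path contains arbitrarily long self-avoiding sub-paths from a fixed vertex, which follows by loop-erasure. Everything else is a routine union-bound calculation.
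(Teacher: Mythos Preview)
Your argument contains a sign error that breaks the union bound. You correctly note that the number of distinct edges incident to a self-avoiding path $\gamma=(v_0,\dots,v_n)$ is \emph{at most} $2d(n+1)-n$, with strict inequality precisely when two non-consecutive vertices on the path happen to be adjacent in $\ZZ^d$. But then the probability that $\gamma$ is shielded equals $(1-p)^{E(\gamma)}$ where $E(\gamma)$ is that edge count, and since $0<1-p<1$, a \emph{smaller} exponent gives a \emph{larger} probability. Thus $(1-p)^{(2d-1)n+2d}$ is a \emph{lower} bound for $\PP_p(\gamma\text{ shielded})$, not an upper bound. Your parenthetical remark (``which only helps by reducing the edge count further, so the bound \dots\ still holds as an upper bound'') has the monotonicity exactly backwards. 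Without an upper bound on each summand, the first-moment computation does not go through, and summing the crude lower bound $(1-p)^{E(\gamma)}\le 1$ over all SAWs clearly diverges.

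The paper repairs this with one additional idea: rather than summing over all SAWs, it first passes from the infinite shielded path to a shielded path $0=y_0,y_1,\dots$ satisfying not only self-avoidance but also the stronger property that $y_j$ is not adjacent to any of $y_0,\dots,y_{j-2}$. This is achieved by a modified loop-erasure: whenever $y_j$ is adjacent to some earlier $y_i$ with $i<j-1$, one shortcuts directly from $y_i$ to $y_j$ (they are adjacent, and the resulting vertex set is a subset of the original, hence still shielded). For paths in this restricted class $\Xi_n$ the edge count is \emph{exactly} $2d+(2d-1)n$, so the probability is \emph{exactly} $q^{2d}(q^{2d-1})^n$, and since $\#\Xi_n$ is bounded by the number of SAWs, your union bound now applies verbatim. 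Your ordinary loop-erasure in the last paragraph only yields self-avoidance, which is not enough; you need this stronger pruning.
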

The actual bound obtained from the proof of Theorem~\ref{upperbound of shield} is $p_{shield} \leq 1-\tilde{\lambda}(d)^{-1/(2d-1)}$. Here, $\tilde{\lambda}(d)$ is the connective constant for vertex self-avoiding walks $\Gamma = (x_0, x_1, \dots)$ with the property that $\|x_k-x_\ell\|_1 = 1$ if and only if $|\ell-k|=1$. This connective constant is strictly below $\lambda(d)$, so the inequality in the theorem is strict. 

Using the elementary bound $\lambda(d) \leq 2d-1$, Theorem~\ref{upperbound of shield} implies
\begin{equation}\label{eq: thm_1_consequence}
p_{shield}(d) \leq 1- \left( \frac{1}{2d-1}\right)^{\frac{1}{2d-1}}.
\end{equation}
Therefore $p_{shield}(2) \leq 1-\left(\frac{1}{3}\right)^{\frac{1}{3}} \sim 0.306... < \frac{1}{2}$, and we obtain
\[
p_{shield}(2) < p_{fin}(2) = p_c(2).
\] 
(For $d=3$, we obtain $p_{shield}(3) \leq 1-\left(\frac{1}{5}\right)^{\frac{1}{5}} \sim 0.275...$, which is larger than $p_c(3) \sim 0.248...$.) In contrast, the next result implies that $p_{shield}(d) > p_c(d)$ for large $d$. 

Write $\e_i$ for the $i$-th standard basis vector, and let $(X_n), (X_n')$ be i.i.d.~with $\PP(X_n=\e_i)=\PP(X_n'=\e_i)=\frac{1}{d}$ for $1\leq i \leq d$. $S_n, S_n'$ are defined as the sum of the first $n$ terms of $(X_n)$ and $(X_n')$ respectively with $S_0=S_0'=0$. Define the probability
\[
p_2 = \mathbb{P}(\|S_n - S_n'\|_1=2 \text{ for some } n \geq 2 \mid \|S_1 - S_1'\|_1=2).
\]
The condition $d\geq 4$ in the following theorem is needed only to guarantee that $p_2<1$.
 
\begin{theorem}\label{lowerbound of shield}  
Suppose that $d \geq 4$ and that $p$ satisfies the conditions
\begin{enumerate}
\item $p < 1-\left( \frac{1}{d} \right)^{\frac{1}{2d-1}}$ and
\item $\frac{1}{1-p} \bigg( p_2 - \frac{1}{d^2} + \frac{1}{d} \left( 1-\frac{1}{d} \right) (d(1-p)^{2d-1}-1)^{-1} \bigg) < 1$.
\end{enumerate}
Then $p_{shield}(d) \geq p$.
\end{theorem}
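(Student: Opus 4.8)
The strategy is the Cox–Durrett second-moment method described in the introduction, applied to oriented shielded paths. Fix $p$ satisfying conditions (1) and (2), and work with oriented self-avoiding paths: sequences $0 = v_0, v_1, \dots, v_n$ with $v_{k+1} - v_k \in \{\e_1,\dots,\e_d\}$. For a path $\Gamma$ let me say $\Gamma$ is \emph{good} if every vertex $v_k$ on $\Gamma$ is shielded, i.e.\ all $2d$ edges incident to $v_k$ are closed. The plan is to let $N_n$ be the number of good oriented paths of length $n$ from $0$, show that $\EE_p[N_n]$ is bounded below uniformly in $n$, that $\EE_p[N_n^2] \le C(\EE_p[N_n])^2$ uniformly in $n$, and then apply the Paley–Zygmund inequality to conclude $\PP_p(N_n > 0) \ge c > 0$ for all $n$; a standard compactness/Kolmogorov argument then produces an infinite oriented shielded path with positive probability, whence $p \le p_{shield}$.

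First I would compute the first moment. The event that a fixed oriented path $\Gamma = (0, v_1, \dots, v_n)$ is good is the event that all edges incident to $\{v_0,\dots,v_n\}$ are closed. Since $\Gamma$ is oriented and self-avoiding, consecutive vertices share one edge, so the total number of distinct edges touching the path is $(2d-1)(n+1)+1$ — each internal step contributes $2d-1$ new edges (one is shared with the predecessor), giving probability $(1-p)^{(2d-1)(n+1)+1}$, and there are $d^n$ oriented paths of length $n$. Hence $\EE_p[N_n] = d^n (1-p)^{(2d-1)(n+1)+1}$, which stays bounded away from $0$ as $n \to \infty$ precisely when $d(1-p)^{2d-1} \ge 1$, i.e.\ $p \le 1 - (1/d)^{1/(2d-1)}$ — this is condition (1) (with equality excluded so the bound is uniform, or handled by monotonicity in $p$).

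The main work, and the main obstacle, is the second moment. Write $\EE_p[N_n^2] = \sum_{\Gamma, \Gamma'} \PP_p(\Gamma, \Gamma' \text{ both good})$, a sum over pairs of oriented paths of length $n$. The two paths are good iff every edge incident to $V(\Gamma) \cup V(\Gamma')$ is closed, so the joint probability is $(1-p)^{|E(\Gamma) \cup E(\Gamma')|}$ where $E$ denotes the edge-set of incident edges. The ratio $\EE_p[N_n^2]/(\EE_p[N_n])^2$ becomes $\sum_{\Gamma,\Gamma'} (1-p)^{-|E(\Gamma) \cap E(\Gamma')|} / d^{2n}$, and the savings are governed entirely by how much the two incident-edge sets overlap. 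The natural way to organize this is to compare $\Gamma$ and $\Gamma' = \Gamma + $ (shift), or more precisely to track the difference walk $S_k - S'_k$ where $S, S'$ are the independent uniform oriented walks generating the two paths: incident-edge overlap between the two neighborhoods happens essentially when $\|S_k - S'_k\|_1 \le 2$, and one must carefully count edges shared when the walks coincide ($\|S_k - S'_k\|_1 = 0$), when they are at a single common neighbor or adjacent ($\|S_k - S'_k\|_1 = 1$), and when they are at $\ell_1$-distance $2$. Summing the per-step gain along the difference walk and using the renewal structure of returns to the set $\{\|\cdot\|_1 \le 2\}$ turns the second-moment ratio into a geometric series whose ratio is exactly the left-hand side of condition (2): the $p_2$ term accounts for returns to distance $2$, the $-1/d^2$ corrects the first-step contribution, and the term with $(d(1-p)^{2d-1}-1)^{-1}$ is the geometric sum over the number of coincident steps (distance $0$), which converges by condition (1). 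Condition (2) says this ratio is $< 1$, so the series converges and $\EE_p[N_n^2] \le C (\EE_p[N_n])^2$ uniformly in $n$.

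I expect the delicate points to be: (i) bookkeeping the exact number of edges shared between the incident-edge neighborhoods of two oriented paths as a function of the relative position of the current vertices and of the recent history (a pair of oriented self-avoiding walks at $\ell_1$-distance $2$ can share an edge only in specific configurations, and one needs $d \ge 4$ to ensure the difference walk leaves distance $2$ with positive probability, i.e.\ $p_2 < 1$, so that the renewal series is nondegenerate); (ii) justifying that the self-avoidance constraint on $\Gamma$ and $\Gamma'$ only \emph{decreases} the number of pairs and hence can be dropped for the upper bound on the second moment, replacing the path count by the i.i.d.\ walk model $(S_k), (S'_k)$; and (iii) the passage from $\PP_p(N_n>0)\ge c$ for all $n$ to the existence of an infinite oriented shielded path, which follows from the fact that $\{N_n > 0\}$ is (up to the usual argument) a decreasing sequence of events whose finite-$n$ probabilities are bounded below, combined with König's lemma on the tree of good oriented paths. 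Once that is in place, an infinite oriented shielded path is in particular an infinite shielded path, so $p \le p_{shield}(d)$, completing the proof.
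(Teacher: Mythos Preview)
Your proposal is correct and follows essentially the same route as the paper: the Cox--Durrett second-moment method on oriented shielded paths, with the key combinatorial step being a bound of the form $\#\mathcal{E}_n(\gamma,\gamma') \le 2d + (2d-1)\#Z_n + \#O_n$ on the shared incident edges in terms of coincidence times $Z_n$ and distance-$2$ times $O_n$ of the difference walk, followed by an excursion/renewal decomposition that produces exactly the expression in condition~(2). Two small remarks: since $S_k$ and $S'_k$ both lie in the hyperplane $H_k$, the difference $S_k - S'_k$ always has even $\ell_1$-norm, so the distance-$1$ case in your outline never occurs; and oriented paths are automatically vertex self-avoiding, so your point (ii) is trivial.
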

The previous result states that $p_{shield}$ can be bounded in terms of the probability $p_2$. It is difficult to find the exact value of $p_2$, but at least we can calculate bounds for it. As a result of above theorems, we get the following corollaries. Write $a_n \sim b_n$ for real sequences $(a_n)$ and $(b_n)$ if $a_n/b_n \to 1$ as $n\to\infty$.

\begin{corollary} \label{corollary p_{shield}}
\begin{equation} 
\begin{split}
p_{shield}(d) \sim \frac{\log d}{2d}  \; \text{as $d\rightarrow \infty$}.
\end{split}
\end{equation}
\end{corollary}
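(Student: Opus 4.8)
The idea is to sandwich $p_{shield}(d)$ between two sequences both asymptotic to $\frac{\log d}{2d}$, using Theorem~\ref{upperbound of shield} for the upper bound and Theorem~\ref{lowerbound of shield} for the lower bound.

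Let me think about the upper bound first. Theorem~\ref{upperbound of shield} gives $p_{shield}(d) \le 1 - \lambda(d)^{-1/(2d-1)}$. Using $\lambda(d) \le 2d-1$ we get (via \eqref{eq: thm_1_consequence}) $p_{shield}(d) \le 1 - (2d-1)^{-1/(2d-1)}$. Now $1 - (2d-1)^{-1/(2d-1)} = 1 - \exp\left(-\frac{\log(2d-1)}{2d-1}\right) \sim \frac{\log(2d-1)}{2d-1} \sim \frac{\log d}{2d}$ as $d \to \infty$ (using $1 - e^{-x} \sim x$ for $x \to 0$). So the upper bound is settled, and in fact even the crude bound $\lambda(d) \le 2d-1$ suffices.

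For the lower bound, I want to find a sequence $p = p(d) \sim \frac{\log d}{2d}$ that satisfies both conditions (1) and (2) of Theorem~\ref{lowerbound of shield} for all large $d$. Condition (1) is $p < 1 - (1/d)^{1/(2d-1)} = 1 - \exp\left(-\frac{\log d}{2d-1}\right) \sim \frac{\log d}{2d-1} \sim \frac{\log d}{2d}$. So condition (1) essentially asks $p \lesssim \frac{\log d}{2d}$, and is compatible with the target asymptotic — though it's tight, so I'll want $p = (1-\delta)\frac{\log d}{2d}$ for a small $\delta>0$, or more carefully $p = \frac{\log d}{2d} - \frac{\omega(d)}{2d}$ with $\omega(d) \to \infty$ slowly; I'll need to check what margin condition (2) forces. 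Condition (2) requires $\frac{1}{1-p}\left(p_2 - \frac{1}{d^2} + \frac{1}{d}(1 - \frac{1}{d})(d(1-p)^{2d-1} - 1)^{-1}\right) < 1$.

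**Main obstacle: controlling $p_2$ and the term $(d(1-p)^{2d-1} - 1)^{-1}$ as $d \to \infty$.** The key quantitative inputs I need are: (a) $p_2 = p_2(d) \to 0$ as $d \to \infty$, ideally with a rate like $p_2 = O(1/d)$, since $p_2$ is the return probability to $\ell^1$-distance $2$ of a difference walk of two independent coordinate-increment walks in high dimension — this should be a standard transience/Green's-function estimate; (b) when $p = \Theta\left(\frac{\log d}{2d}\right)$, we have $(1-p)^{2d-1} = \exp\left((2d-1)\log(1-p)\right) \approx \exp\left(-(2d-1)p\right) \approx \exp(-\log d) = 1/d$, so $d(1-p)^{2d-1} \approx 1$ and the denominator $d(1-p)^{2d-1} - 1$ is delicate. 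To make this denominator bounded away from $0$ (indeed, to keep the whole third term $o(1)$), I'll want $p$ somewhat below $\frac{\log d}{2d}$, say $p = \frac{\log d - c\log\log d}{2d}$ or $p = (1-\delta)\frac{\log d}{2d}$, so that $d(1-p)^{2d-1} \to \infty$ (e.g. like $(\log d)^c$ or like $d^\delta$), making the third term $\frac{1}{d}(1-\frac1d)(d(1-p)^{2d-1}-1)^{-1} = o(1/d)$. Then condition (2) reads roughly $\frac{1}{1-p}(p_2 + o(1/d)) < 1$, which holds once $p_2 < 1 - o(1)$ — certainly true since $p_2 \to 0$. So any such choice of $p$ works for large $d$, and choosing $\delta = \delta(d) \to 0$ (or the $c\log\log d$ correction) gives $p(d) \sim \frac{\log d}{2d}$.

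Putting these together: for the lower bound I will take, say, $p(d) = \frac{\log d - 2\log\log d}{2d}$ (a specific choice to be tuned), verify that $p(d) \sim \frac{\log d}{2d}$, check condition (1) via $1 - (1/d)^{1/(2d-1)} \sim \frac{\log d}{2d} > p(d)$ for large $d$, and check condition (2) using $d(1-p)^{2d-1} \to \infty$ and $p_2(d) \to 0$; then Theorem~\ref{lowerbound of shield} gives $p_{shield}(d) \ge p(d)$, so $\liminf \frac{2d \, p_{shield}(d)}{\log d} \ge 1$. Combined with the upper bound $\limsup \frac{2d\, p_{shield}(d)}{\log d} \le \limsup \frac{2d}{\log d}\left(1 - (2d-1)^{-1/(2d-1)}\right) = 1$, this yields $p_{shield}(d) \sim \frac{\log d}{2d}$. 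The one genuinely non-routine ingredient is the estimate $p_2(d) \to 0$ (with enough of a rate to survive the $\frac{1}{1-p}$ factor, which is $1 + o(1)$ anyway), and I expect the paper establishes this separately — perhaps in the course of proving Theorem~\ref{lowerbound of shield} or in a preceding lemma — via a high-dimensional random walk Green's function bound.
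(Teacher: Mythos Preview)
Your proposal is correct and follows essentially the same approach as the paper. The paper's proof uses the slightly cleaner choice $p = \frac{a\log d}{2d}$ for fixed $a \in (0,1)$ (then lets $a \uparrow 1$), which gives $d(1-p)^{2d-1} = d^{1-a+o(1)} \to \infty$, and for condition (2) only invokes $p_2 < 1$ (from Lemma~\ref{lem: p_2_p_d}) rather than the stronger $p_2 \to 0$ you anticipate; otherwise the structure and all the key estimates match yours.
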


\begin{remark}
Corollary~\ref{corollary p_{shield}} implies that
\[
\liminf_{d \to \infty} \frac{p_{fin}(d)}{\frac{\log d}{2d}} \geq 1.
\]
It would be interesting to have asymptotic upper bounds for $p_{fin}(d)$. Is $\frac{\log d}{2d}$ the correct order of $p_{fin}$, as it is \cite{MR3177869} on the $2d$-regular tree?
\end{remark}

\begin{corollary} \label{p_{shield >p_c} by numerical} For $d \geq 10$, 
\begin{equation} 
\begin{split}
p_c(d) < p_{shield}(d) \leq p_{fin}(d).
\end{split}
\end{equation}
\end{corollary}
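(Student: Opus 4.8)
The bound $p_{shield}(d)\le p_{fin}(d)$ is already in hand, so the content of the corollary is the strict inequality $p_c(d)<p_{shield}(d)$ for $d\ge 10$. Qualitatively this is already available for large $d$: Corollary~\ref{corollary p_{shield}} gives $p_{shield}(d)\sim\frac{\log d}{2d}$ while $p_c(d)\sim\frac{1}{2d}$, so $p_{shield}(d)/p_c(d)\to\infty$ and hence $p_c(d)<p_{shield}(d)$ eventually; the work is to make this effective all the way down to $d=10$. By Theorem~\ref{lowerbound of shield} it suffices to exhibit, for each $d\ge 10$, a single value $p=p(d)$ with $p(d)>p_c(d)$ satisfying conditions (1) and (2) of that theorem.

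The plan is to take $p(d)=\frac{c}{2d}$ for a fixed constant $c\in(1,2)$. Then condition (1) is automatic for every $d\ge 10$, since $\frac{c}{2d}<\frac1d\le 1-(1/d)^{1/(2d-1)}$ there (the last inequality holds already for all $d\ge 8$). For $p(d)>p_c(d)$ I would invoke an upper bound $p_c(d)\le\overline{p_c}(d)$ with $2d\,\overline{p_c}(d)\to 1$ --- available rigorously for the dimensions in question from high-dimensional percolation, or, for the strengthening to $d\ge 7$ discussed below the corollary, from the simulation data of the appendix --- and choose $c$ above $\sup_{d\ge 10}2d\,\overline{p_c}(d)$, which exceeds $1$ only slightly; for instance $c=\tfrac32$ works once $\overline{p_c}(10)<\tfrac1{10}$, and this still leaves $p(d)$ below the condition-(1) threshold.

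It remains to verify condition (2), and for this I need an explicit upper bound $\overline{p_2}(d)\ge p_2(d)$, since (2) is increasing in $p_2$ and $p_2$ is not known exactly. Writing $D_n:=S_n-S_n'$, the event defining $p_2$ is that $D$ revisits the $\ell_1$-sphere of radius $2$ at some $n\ge 2$. I would bound this by splitting off the $n=2$ term, for which a short enumeration gives the exact value $\tfrac{3d-4}{d^2}$, and controlling the contribution of times $n\ge 3$ by hitting-probability estimates for $D$: since $D$ takes values in the $(d-1)$-dimensional sublattice $\{x:\sum_i x_i=0\}$ and is transient for $d\ge 4$, its return probability and its chances of hitting the finitely many nearby $\ell_1$-radius-$2$ points are $O(1/d)$, which yields an explicit $\overline{p_2}(d)\to 0$. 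Substituting $\overline{p_2}(d)$ and $p=p(d)$ into (2) turns its left side into an explicit function of $d$ that tends to $0$ as $d\to\infty$ (the $\frac1{1-p}$ prefactor $\to1$, $\overline{p_2}(d)$ and $1/d^2\to0$, and $(d(1-p)^{2d-1}-1)^{-1}\to0$ since $d(1-p)^{2d-1}\to\infty$), so checking (2) for all $d\ge 10$ reduces to a finite numerical verification at $d=10,11,\dots,d_0$ together with the asymptotic statement just made.

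The main obstacle is the numerology at $d=10$, where the margins are tightest: one needs the upper bound on $p_c(10)$ and the upper bound $\overline{p_2}(10)$ to be simultaneously sharp enough that some $p$ lying strictly between $\overline{p_c}(10)$ and $1-(1/10)^{1/19}$ still satisfies (2). The $p_c$ input is essentially off the shelf, so the real effort is in obtaining a good enough rigorous bound on $p_2(d)$ at moderate $d$, where the difference walk's return probability, though finite, is not yet negligible; too crude an estimate of these hitting probabilities would only let the scheme start at some $d>10$.
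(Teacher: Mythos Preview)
Your overall strategy matches the paper's: pick $p$ just above a rigorous upper bound for $p_c(d)$, verify conditions (1) and (2) of Theorem~\ref{lowerbound of shield} with an explicit bound on $p_2$, and reduce to a finite numerical check. The paper, however, is concrete in exactly the two places where you are vague, and at $d=10$ that concreteness turns out to be essential.

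First, the upper bound on $p_c$. The paper does not appeal to ``high-dimensional percolation'' in the lace-expansion sense; it uses the Cox--Durrett oriented-percolation bound $p_c(d)\le\rho_d$ and writes down an explicit $g(d)\ge\rho_d$ (equation~\eqref{eq: p_c_upper_bound}). One computes $g(10)\approx 0.1024$. Your parametrization $p=c/(2d)$ with $c\in(1,2)$ forces $p(10)<0.1<g(10)$, so with the bound actually on hand you cannot certify $p(10)>p_c(10)$; the assumption $\overline{p_c}(10)<\tfrac{1}{10}$ is not available rigorously. (Your intermediate step $c/(2d)<1/d$ to verify condition~(1) is unnecessarily restrictive: the actual threshold $1-(1/10)^{1/19}\approx 0.114$ leaves room for $p$ up to about $0.114$, and indeed $g(10)$ lands there.) The paper sidesteps this by taking $p=g(d)$ itself rather than any fixed multiple of $1/(2d)$.

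Second, the bound on $p_2$. The paper does not attack the hitting probability directly but uses the exact identity $p_2=\dfrac{(d^2+1)p_d-d-1}{d^2p_d-d}$ of Lemma~\ref{lem: p_2_p_d}(1), reducing everything to an upper bound on the return probability $p_d$, which Lemma~\ref{lem: p_2_p_d}(2) supplies term by term (exact values for $\mathbb{P}(\tau\le 3)$, a sharp inequality for $\mathbb{P}(\tau=4)$, and local-CLT-type tail bounds beyond). The resulting numbers are what make $d=10$ go through: with $p=g(10)$, the left side of condition~(2) evaluates to $0.944$, barely under $1$. A cruder direct estimate of $p_2$ along your outline---splitting off $n=2$ and bounding $n\ge 3$ by generic transience---would almost certainly overshoot at $d=10$, pushing the first admissible dimension up, which is precisely the failure mode you anticipated in your last paragraph.
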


\begin{remark}
For $p<p_{shield}$, it is immediate that the graph induced by the set of shielded vertices has a percolation threshold strictly smaller than 1. This is of interest in the context of frozen percolation models, where edges are opened according to independent exponential clocks. Soon after an infinite component emerges, all edges adjacent to the component are ``frozen'' (do not change their state anymore). If this freezing occurs at parameter $p \in (p_c,p_{shield})$, then a new infinite component will emerge at a later time.
\end{remark}

The $d \geq 10$ bound does not appear to be related to mean-field behavior, as the bounds of \cite{MR3177869} are. If numerical values of $p_c(d)$ from \cite{MR1976824} or \cite{2018arXiv180608067M} are used, we can improve the dimension in Corollary~\ref{p_{shield >p_c} by numerical} to $d \geq 7$. This is shown in Table \ref{numerical result} in the appendix. The $d \geq 7$ result is only partially rigorous, since it uses estimates for $p_c$ obtained from computer simulations to find dimensions $d$ for which the inequalities of Theorem~\ref{lowerbound of shield} hold. We should mention that the method of \cite{MR3177869}, combined with non-rigorous estimates for the one-arm exponent would allow to show that $p_{fin}(d) > p_c(d)$ for $d \geq 6$.

 \subsection{Outline of the paper}

In the next section, we give a short proof (relying only on the fact that $p_c(d) \sim 1/(2d)$ as $d \to\infty$ --- this fact is not needed in any of our other results) that $p_{shield}(d) > p_c(d)$ for $d$ large enough. The proof we give would be difficult to make quantitative, since it uses (far from optimal) estimates from $1$-dependent percolation. We present it because it gives a simple explanation for the inequality in high dimension. In Section~\ref{sec: proofs_of_theorems}, we prove Theorems~\ref{upperbound of shield} and \ref{lowerbound of shield}. In Section~\ref{sec: proofs_of_corollaries}, we prove Corollaries~\ref{corollary p_{shield}} and \ref{p_{shield >p_c} by numerical}. Finally, in the appendix, we explain how we show numerically that for $d \geq 7$, one has $p_{shield}(d) > p_c(d)$.

\section{Short proof of $p_{shield}(d) > p_c(d)$ for large $d$}

In this section we give a short proof that $p_{shield}(d)$ (and therefore $p_{fin}(d)$) is larger than $p_c(d)$ if $d$ is large. The idea is that since $p_c(d) \sim 1/(2d)$, the probability that a vertex is shielded at parameter $p_c(d)$ is $(1-p_c(d))^{2d} \to e^{-1}$ as $d\to\infty$. Since $p_c^{site} \to 0$, one can reasonably expect the shielded sites to percolate. Since the shielded sites are not independent, but are rather 1-dependent, we will need to work a little harder.

Let $a>1$ and fix $d_\ast$ so that 
\begin{equation}\label{eq: d_ast_choice}
p_c^{site}(d_\ast) < e^{-a}.
\end{equation} 
(This is possible since $p_c^{site}(d) \to 0$ as $d \to\infty$.) For $d\geq d_\ast$, set 
\[
\mathbb{Z}_\ast^d = \{x \in \mathbb{Z}^d : x \cdot \e_i = 0 \text{ for } i = d_{\ast}+1, \dots, d\}.
\]
We say that a vertex $x \in \mathbb{Z}_\ast^d$ is partially shielded if all edges of the form $\{x,x \pm \e_i\}$ are closed for $i=d_\ast+1, \dots, d$. Note that the partially shielded vertices form an independent site percolation process on $\mathbb{Z}^d_\ast$ with parameter $(1-p)^{2(d-d_\ast)}$. Set $p = p(d) =\frac{a}{2d}$ so that, because $p_c(d) \sim \frac{1}{2d}$, we have $p > p_c$ for large $d$. Furthermore, for any $x \in \mathbb{Z}^d_\ast$,
\[
\mathbb{P}_p(x \text{ is partially shielded}) = \left( 1-\frac{a}{2d}\right)^{2(d-d_\ast)} \to e^{-a} \text{ as } d \to\infty.
\]

For $x \in \mathbb{Z}^d_\ast$, we define $Y_x$ to be the indicator of the event that all edges of the form $\{x,x\pm \e_i\}$ are closed for $i=1, \dots, d_\ast$. Then the $Y_x$'s form a 1-dependent site percolation process on $\mathbb{Z}^d_\ast$ (independent of the process of partially shielded vertices) such that for any $x \in \mathbb{Z}^d_\ast$,
\[
\mathbb{P}_p(Y_x = 1) = \left( 1-\frac{a}{2d}\right)^{2d_\ast} \to 1 \text{ as } d \to\infty.
\]
Therefore the result of Liggett-Schonmann-Stacey (presented in \cite[Theorem~7.65]{grimmett_percolation}) implies that $(Y_x)$ is stochastically bounded below by an independent site percolation process $(Z_x)$ with $\mathbb{P}(Z_x =1) \to 1$ as $d \to\infty$. We will assume that the variables $Z_x$ are coupled with the original percolation process so that if $Z_x = 1$, then $Y_x=1$ and that the $Z_x$'s are independent of the process of partially shielded vertices.

Call $x \in \mathbb{Z}^d_\ast$ green if $x$ is partially shielded and $Z_x = 1$. Then the set of shielded vertices in $\mathbb{Z}^d_\ast$ contains the set of green vertices. Since
\[
\mathbb{P}_p(x \text{ is green}) = \left( 1-\frac{a}{2d}\right)^{2(d-d_\ast)} \mathbb{P}(Z_x=1) \to e^{-a} \text{ as } d \to\infty,
\]
inequality \eqref{eq: d_ast_choice} implies that for $d$ large, this probability is $>p_c^{site}(d_\ast)$. Because the green sites form an independent site percolation process on $\mathbb{Z}^d_\ast$, one has 
\[
\mathbb{P}_p(\text{there is an infinite component of green vertices})>0 \text{ for large } d.
\]
This implies that for large $d$, one has $p_{shield}(d) \geq p = \frac{a}{2d} >p_c(d)$. 

Note that since this proof works for any $a>1$, it actually shows that $p_{shield}(d) /p_c(d) \to \infty$ as $d \to \infty$.

\section{Proofs of Theorems~\ref{upperbound of shield} and \ref{lowerbound of shield}}\label{sec: proofs_of_theorems}

\begin{proof} [Proof of Theorem \ref{upperbound of shield}]

Suppose $p < p_{shield}$; that is, there is a.s. an infinite shielded path $\Gamma$. This is a path, which we will take to be vertex self-avoiding, whose vertices are all shielded. By translation invariance, the probability that the origin is contained in such a path is positive. We will use a Peierls-type argument to show that $p$ cannot be too large.

To do this, we first extract from $\Gamma$ another vertex self-avoiding path with shielded vertices $0=y_0, y_1, \dots$ satisfying the following two conditions:
%To do this, we enumerate the vertices of $\Gamma$ as $0=x_0, x_1, \dots$, and use a type of loop-erasure to produce from them another vertex self-avoiding path with shielded vertices $0=y_0,y_1, \dots$. We begin with $y_0 = 0$. We then define $k_1$ as the last index such that $x_{k_1}$ is adjacent to $y_0$. We ``remove the loop'' between $x_0$ and $x_{k_1}$ by setting $y_1 = x_{k_1}$. Continuing, assuming we have defined $y_0, \dots, y_j$ and $k_1, \dots, k_j$ for $j \geq 1$, we let $k_{j+1}$ be the last index such that $x_{k_{j+1}}$ is adjacent to $y_j$, and set $y_{j+1} = x_{k_{j+1}}$. Note that there is always at least one such vertex because $x_{k_j+1}$ is adjacent to $y_j$. Therefore the sequence $(k_j)$ is strictly increasing, and the $y_j$'s are all distinct.
%
%Clearly each of the $y_k$'s is shielded. We note that the sequence $(y_j)$ has the following properties: 
\begin{equation}\label{eq: sequence_property_1}
y_j \text{ is adjacent to }y_{j-1} \text{ for }j \geq 1
\end{equation}
 and 
\begin{equation}\label{eq: sequence_property_2}
y_j \text{ is not adjacent to any of }y_0, \dots, y_{j-2}\text{ for }j \geq 2.
\end{equation}
%Indeed, \eqref{eq: sequence_property_1} holds by the definition of $y_j$ (it is $x_{k_j}$, which is adjacent to $x_{k_{j-1}}=y_{j-1}$). To see \eqref{eq: sequence_property_2}, note that if $i=0, \dots, j-2$, then $k_{i+1}$ is the last index such that $x_{k_{i+1}}$ is adjacent to $y_i$, and since $i+1 \leq j-1$, the number $k_j$ must be strictly larger than $k_{i+1}$. Therefore $y_j$ cannot be adjacent to $y_i$.
This can be accomplished using a loop-erasure procedure.

Let $\Xi_n$ be the set of sequences $0=y_0, \dots, y_n$ of distinct vertices with properties \eqref{eq: sequence_property_1} (for $j=1, \dots, n$) and \eqref{eq: sequence_property_2} (for $j=2, \dots, n$). Then the probability that any $\gamma \in \Xi_n$ is shielded is $q^{2d}(q^{2d-1})^n$, where $q=1-p$. Because $p<p_{shield}$, for each $n$,
\[
0 < \inf_m \mathbb{P}_p\left(\text{some } \gamma \in \Xi_m \text{ is shielded}\right) \leq \sum_{\gamma\in \Xi_n} q^{2d}(q^{2d-1})^n = q^{2d} \left( q^{n(2d-1)} \#\Xi_n \right).
\]
Because of property \eqref{eq: sequence_property_1}, each $\gamma \in \Xi_n$ is a vertex self-avoiding path with $n+1$ vertices, started at 0. The number of such paths equals $(\lambda(d)+o(1))^{n+1}$ as $n \to\infty$, so
\[
\text{if } p < p_{shield}(d), \text{ then } q^{2d}\left( q^{n(2d-1)} (\lambda(d)+o(1))^{n+1}\right) \text{ is bounded away from 0 as } n \to \infty.
\]
This implies $q^{2d-1} \lambda(d) \geq 1$, and so we find $p \leq 1-\lambda(d)^{-\frac{1}{2d-1}}$ for any $p$ satisfying $p<p_{shield}(d)$. This completes the proof.
\end{proof}

Next, we move to lower bounds for $p_{shield}$.

\begin{proof} [Proof of Theorem \ref{lowerbound of shield}] We use a version of the second moment method from oriented percolation in \cite{MR684285}. Let $R_n$ be the set of oriented paths from the origin to 
\begin{equation}\label{eq: H_n_def}
H_n :=\bigg\{ y \in \ZZ^d: \dsum_{i=1}^d (y\cdot \e_i) =n \bigg\}.
\end{equation}
Let $N_n$ be the (random) number of shielded paths in $R_n$. Then,
\begin{align}
\EE_p N_n &= \dsum_{\gamma \in R_n} \PP_p(\text{all sites in $\gamma$ are shielded}) = q^{2d} (dq^{2d-1})^n, \text{ and} \label{eq: first_moment}\\
\EE_p N_n^2 &= \dsum_{\gamma, \gamma' \in R_n} \PP_p(\text{all sites in $\gamma \cup \gamma'$ are shielded}),  \label{eq: new_second_moment}
\end{align}
where $q=1-p$. The object is now to find values of $p$ for which $\frac{\mathbb{E}_pN_n^2}{(\mathbb{E}_pN_n)^2}$ is bounded away from infinity. If we do this, then $\mathbb{P}_p(N_n \geq 1) = \mathbb{P}_p(N_n > 0) \geq \frac{(\mathbb{E}_pN_n)^2}{\mathbb{E}_pN_n^2}$ will be bounded away from zero, and there will be an infinite (oriented) shielded path with positive probability. For such values of $p$, then, we will have $p \leq p_{shield}$, and this produces a lower bound on $p_{shield}$. In other words,
\begin{equation}\label{eq: the_goal}
\text{if }  \sup_n \frac{\mathbb{E}_pN_n^2}{(\mathbb{E}_pN_n)^2} < \infty, \text{ then } p \leq p_{shield}.
\end{equation}

We now write the probability in the sum for $\mathbb{E}_p N_n^2$ as
\begin{equation}\label{eq: new_edge_probability}
\mathbb{P}_p(\text{all sites in $\gamma \cup \gamma'$ are shielded}) =q^{2(2d+n(2d-1)) - \#\mathcal{E}_n(\gamma,\gamma')},
\end{equation}
where $\mathcal{E}_n(\gamma,\gamma')$ is the set of edges with an endpoint in $\gamma$ and an endpoint in $\gamma'$ (this could be the same endpoint). We claim that
\begin{equation}\label{eq: new_edge_bound}
\#\mathcal{E}_n(\gamma,\gamma') \leq 2d + (2d-1)\#Z_n(\gamma,\gamma') + \#O_n(\gamma,\gamma'),
\end{equation}
where the vertices of $\gamma$ (in order) are written as $0 = x_0, \dots, x_n$, the vertices of $\gamma'$ (in order) are written as $0=x_0', \dots, x'_n$, and we define
\[
\begin{split}
Z_n(\gamma, \gamma') &= \{k=1, \dots, n: x_k = x_k'\}, \text{ and} \\
O_n(\gamma,\gamma') &= \{k=1, \dots, n: \|x_k-x_k'\|_1=2\}.
\end{split}
\]

The proof of \eqref{eq: new_edge_bound} is combinatorial in nature and will require us to introduce a few more sets. First partition the edges in $\mathcal{E}_n(\gamma,\gamma')$ into two subsets:
\[
\mathcal{E}_n^{(1)}(\gamma,\gamma') = \{e \in \mathcal{E}_n(\gamma,\gamma') : e \text{ is adjacent to some }x_k \text{ with } k \in Z_n(\gamma,\gamma') \cup \{0\}\}
\]
and
\[
\mathcal{E}_n^{(2)}(\gamma,\gamma') = \mathcal{E}_n(\gamma,\gamma') \setminus \mathcal{E}_n^{(1)}(\gamma,\gamma').
\]
Next, partition the numbers in $O_n(\gamma,\gamma')$ into two subsets:
\[
O_n^{(1)}(\gamma,\gamma') = \{k = 1, \dots, n-1 : k \in O_n(\gamma,\gamma'), k+1 \in Z_n(\gamma,\gamma')\}
\]
and
\[
O_n^{(2)}(\gamma,\gamma') = O_n(\gamma,\gamma') \setminus O_n^{(1)}(\gamma,\gamma').
\]
With these definitions, we will prove that
\begin{equation}\label{eq: step_1_comb}
\#\mathcal{E}_n^{(1)}(\gamma,\gamma') = 2d + (2d-1)\#Z_n(\gamma,\gamma') + \#O_n^{(1)}(\gamma,\gamma'),
\end{equation}
and
\begin{equation}\label{eq: step_2_comb}
\#\mathcal{E}_n^{(2)}(\gamma,\gamma') \leq \#O_n^{(2)}(\gamma,\gamma').
\end{equation}
If we sum these two displays, we obtain the desired inequality \eqref{eq: new_edge_bound}. Therefore, to show \eqref{eq: new_edge_bound}, it suffices to prove \eqref{eq: step_1_comb} and \eqref{eq: step_2_comb}.

First we prove \eqref{eq: step_1_comb}. We write $Z_n(\gamma,\gamma') \cup \{0\}$ as a union of (integer) intervals: it equals $[i_0,i_1] \cup [i_2, i_3] \cup \dots \cup [i_{2m},i_{2m+1}]$, where $i_0=0$, $m$ is a nonnegative integer, and $i_{2r} \geq i_{2r-1}+2$ for $r=1, \dots, m$. Then because the paths $\gamma,\gamma'$ in these intervals overlap entirely, we obtain
\begin{align*}
\#\mathcal{E}_n^{(1)}(\gamma,\gamma') &= \sum_{r=0}^m \#\{e \text{ adjacent to }\{x_{i_{2r}}, \dots, x_{i_{2r+1}}\}\} \\
&= (m+1) + (2d-1)\sum_{r=0}^m (i_{2r+1}-i_{2r}+1) \\
&= (m+1)+(2d-1)(\#Z_n(\gamma,\gamma')+1).
\end{align*}
However, the elements of $O_n^{(1)}(\gamma,\gamma')$ are exactly those that precede $i_2, i_4, \dots, i_{2m}$; that is, they are $i_2-1, \dots, i_{2m}-1$. Therefore $\#O_n^{(1)}(\gamma,\gamma') = m$, and this gives \eqref{eq: step_1_comb}.

To prove \eqref{eq: step_2_comb}, we construct an injective map from $\mathcal{E}_n^{(2)}(\gamma,\gamma')$ into $O_n^{(2)}(\gamma,\gamma')$. The existence of such a map immediately implies \eqref{eq: step_2_comb}. To define it, let $e \in \mathcal{E}_n^{(2)}(\gamma,\gamma')$. Since $e$ is adjacent to both $\gamma$ and $\gamma'$, but not to any vertices of $\gamma \cap \gamma'$, it must have one endpoint in $\gamma \setminus \gamma'$ and one in $\gamma' \setminus \gamma$. Therefore for some $k=1, \dots, n-1$, either $e = \{x_k,x_{k+1}'\}$ or $e = \{x_k',x_{k+1}\}$. In either case, we set $i(e) = k$.
\begin{enumerate}
\item[(a)] $i$ is injective. To see why, let $e,f \in \mathcal{E}_n^{(2)}(\gamma,\gamma')$ and suppose that $i(e) = i(f) = k$. Then if $x_k$ (or $x_k'$) is the lower endpoint of both $e$ and $f$, we must have $e=f=\{x_k,x'_{k+1}\}$ (or $\{x'_k,x_{k+1}\}$). Otherwise, up to a relabeling, $e = \{x_k,x_{k+1}'\}$ and $f = \{x_k',x_{k+1}\}$. Then $x_{k+1}$ and $x_{k+1}'$ are both adjacent to $x_k$ and $x_k'$. However $x_k \neq x_k'$ by the definition of $\mathcal{E}_n^{(2)}(\gamma,\gamma')$; thus there is at most one vertex in $H_{k+1}$ that is adjacent to both $x_k$ and $x_k'$. This gives $x_{k+1}=x_{k+1}'$, a contradiction, since $e \in \mathcal{E}_n^{(2)}(\gamma,\gamma')$.
\item[(b)] For all $e \in \mathcal{E}_n^{(2)}(\gamma,\gamma')$, one has $i(e) \in O_n^{(2)}(\gamma,\gamma')$. Indeed, write $e = \{x_k,x_{k+1}'\}$ (after a possible relabeling of $\gamma,\gamma'$) so that $i(e) = k$. Since $e \in \mathcal{E}_n^{(2)}(\gamma,\gamma')$, one has $x_k \neq x_k'$, but $\|x_k-x_k'\|_1 \leq \|x_k-x_{k+1}'\|_1 + \|x_k'-x_{k+1}'\|_1 = 2$. This implies that $\|x_k-x'_k\|_1=2$ and thus that $k \in O_n(\gamma,\gamma')$. A similar argument works for $k+1$: one has $x_{k+1} \neq x_{k+1}'$, but $\|x_{k+1}-x_{k+1}'\|_1 \leq \|x_{k+1}'-x_k\|_1 + \|x_{k+1}-x_k\|_2 = 2$. Therefore $k+1 \in O_n(\gamma,\gamma')$. We conclude that $k \in O_n^{(2)}(\gamma,\gamma')$.
\end{enumerate}

Now that we have shown \eqref{eq: new_edge_bound}, we combine it with \eqref{eq: new_edge_probability} and \eqref{eq: new_second_moment} to obtain
\begin{align}
\mathbb{E}_pN_n^2 &\leq \sum_{\gamma,\gamma' \in R_n} q^{2(2d+n(2d-1)) - 2d - (2d-1)\#Z_n(\gamma,\gamma') - \#O_n(\gamma,\gamma')} \nonumber \\
&= \frac{(\mathbb{E}_p N_n)^2}{q^{2d}d^{2n}} \sum_{\gamma,\gamma' \in R_n} q^{-\#O_n(\gamma,\gamma') - (2d-1)\#Z_n(\gamma,\gamma')}. \label{eq: second_moment}
\end{align}
%We now write the probability in the sum for $\mathbb{E}_pN_n^2$ as a product of many factors. First, for the path $\gamma$, we get a factor $q^{2d}q^{(2d-1)n}$. For the other path, write the
%vertices of $\gamma$ (in order) as $x_0, \dots ,x_n$ and the vertices of $\gamma'$ as $x_0', \dots, x_n'$. Let $k$ satisfy $0 \leq  k \leq n$. If $x_k = x_k'$, then all edges incident to $x_k'$ have already been counted in the factor for $\gamma$. If instead $||x_k - x_k'||_1 = 2 $, then $x_k'$ is adjacent to at most two vertices of $\gamma$, so we get a factor of at most $q^{2d-3}$. Last, $x_k'$ is not adjacent to any vertices of $\gamma$ if $||x_k -x_k' ||_1  > 2$,  so we get a factor of $q^{2d-1}$. Let 
%\[
%\begin{split}
%Z_n(\gamma, \gamma') &= \#\{k=1, \dots, n: x_k = x_k'\}, \text{ and} \\
%O_n(\gamma,\gamma') &= \#\{k=1, \dots, n: \|x_k-x_k'\|_1=2\}.
%\end{split}
%\]
%Then we get the upper bound
%\begin{align}
%\EE_p N_n^2 & \leq  \dsum_{\gamma, \gamma' \in R_n} q^{2d} q^{(2d-1)n} q^{(2d-3)O_n(\gamma, \gamma')}   q^{(2d-1)(n-O_n(\gamma, \gamma') - Z_n(\gamma, \gamma'))}\nonumber \\
%& = \frac{(\EE_p N_n)^2}{q^{2d}d^{2n}}   \dsum_{\gamma, \gamma' \in R_n}  q^{-2{O_n(\gamma, \gamma')}}   q^{-(2d-1)Z_n(\gamma, \gamma')}. \label{eq: second_moment}
%\end{align}
We now represent $O_n( \gamma, \gamma')$ and $Z_n(\gamma, \gamma')$ using random walks. Let $(X_k), (X_k')$ be i.i.d.~sequences with
\begin{equation*}
\begin{split}
\PP (X_k=\e_i)=\PP(X_k'=\e_i)=\frac{1}{d} \;\; \text{for $1\leq i \leq d$}.
\end{split}
\end{equation*}
$S_n$ and $S_n'$ are defined as the sum of the first $n$ terms of $(X_k)$ and $(X_k')$ respectively with $S_0=S_0'=0$. Let
\begin{equation*}
\begin{split}
Z_n &= \{k=1,\dots, n : S_k = S_k'\}\\
O_n &= \{k =1,\dots, n : ||S_k- S_k' ||_1 =2\}.
\end{split}
\end{equation*}
Using these variables and \eqref{eq: second_moment}, we have the representation 
\[
\frac{\EE_p N_n^2}{(\EE_p N_n)^2} \leq   \frac{1}{q^{2d}} \EE \left[ q^{-\#O_n-(2d-1)\#Z_n}\right],
\]
so by the monotone convergence theorem,
\[
\frac{\mathbb{E}_pN_n^2}{(\mathbb{E}_pN_n)^2} \leq \frac{1}{q^{2d}} \EE q^{-\#O-(2d-1)\#Z} \text{ for all } n,
\]
where $\#Z = \lim_{n \to \infty} \#Z_n$ and $\#O = \lim_{n \to \infty} \#O_n$. Putting this in \eqref{eq: the_goal}, we obtain
\begin{equation}\label{eq: the_new_goal}
\text{if }  \EE q^{-\#O-(2d-1)\#Z} < \infty,~ \text{then } p \leq p_{shield}.
\end{equation}

We compute the expectation in the following lemma. Along with \eqref{eq: the_new_goal}, it immediately implies the main result, Theorem~\ref{lowerbound of shield}. (The condition $p_2<1$ for $d \geq 4$ will be verified in Lemma~\ref{lem: p_2_p_d}.)
\end{proof}

\begin{lemma}\label{lem: joint_mgf}
Assume that $p_2 < 1$ and that $q=1-p$ satisfies
\[
dq^{2d-1} > 1 \text{ and } f(q) < q,
\]
where
\[
f(q) = \left( \frac{1}{d} - \frac{1}{d^2}\right)\left( dq^{2d-1}-1\right)^{-1} + p_2 - \frac{1}{d^2}.
\]
Then
\[
\mathbb{E}q^{-\#O-(2d-1)\#Z} = (1-p_2) \left( 1- \frac{1}{d}\right)\frac{dq^{2d-1}}{dq^{2d-1}-1} (q- f(q))^{-1} < \infty.
\]
\end{lemma}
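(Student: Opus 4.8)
The plan is to compute $\mathbb{E}q^{-\#O - (2d-1)\#Z}$ by decomposing the random walk difference $D_k := S_k - S_k'$ according to its successive returns to the set of states at $\ell^1$-distance $0$ or $2$ from the origin, since these are the only states that contribute to the exponent. The key observation is that $\#O$ counts steps $k$ with $\|D_k\|_1 = 2$ and $\#Z$ counts steps with $D_k = 0$, and that once $\|D_k\|_1 > 2$ the walk contributes nothing until it next comes back to $\{\|D\|_1 \le 2\}$; moreover, from $\|D_k\|_1 = 2$ there is a chance to go to $D_{k+1} = 0$, and from $D_k = 0$ the walk always leaves to $\|D\|_1 = 2$ in one step (as $D$ must move). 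So I would set up a renewal-type / generating-function computation: let $\alpha$ be the probability, starting from a state with $\|D\|_1 = 2$, that the walk ever returns to $\{\|D\|_1 = 2\} \cup \{0\}$, tracking whether it returns to $0$ or to $\|D\|_1 = 2$, and weight each visit to a $\|\cdot\|_1=2$ state by $q^{-1}$ and each visit to $0$ by $q^{-(2d-1)}$.

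More concretely, I would condition on $D_1$: with probability $1/d$ we have $D_1 = 0$ (same increment), and with probability $1 - 1/d$ we have $\|D_1\|_1 = 2$. Write $g$ for the full expectation $\mathbb{E}q^{-\#O-(2d-1)\#Z}$. Starting from $\|D\|_1 = 2$: the walk picks up a factor $q^{-1}$ for the current visit; by the definition of $p_2$, with probability $p_2$ it returns to $\{\|D\|_1 = 2\}$ at some later time (and by an argument using the strong Markov property and the structure of the walk, I expect the relevant return is to a $\|\cdot\|_1 = 2$ state and not to $0$ directly — the direct-to-zero transition from a distance-$2$ state has probability $1/(d^2)$ times ... this needs care), and with the complementary probability it escapes forever. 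From $0$, the walk moves in one step to a distance-$2$ state with probability $1$, picking up $q^{-(2d-1)}$. The term $(dq^{2d-1} - 1)^{-1}$ strongly suggests a geometric series $\sum_{j \ge 0}(dq^{2d-1})^{-j-1}$ arising from excursions that pass through $0$ repeatedly, each such excursion contributing $q^{-(2d-1)}$ and combinatorial factor involving $d$; the presence of $p_2$ and $1/d^2$ together with $f(q) = (\tfrac1d - \tfrac1{d^2})(dq^{2d-1}-1)^{-1} + p_2 - \tfrac1{d^2}$ indicates that $f(q)$ is precisely the expected weighted "one more distance-$2$ excursion" factor, and the answer $g = (1-p_2)(1-\tfrac1d)\tfrac{dq^{2d-1}}{dq^{2d-1}-1}(q - f(q))^{-1}$ comes from summing a geometric series with ratio $f(q)/q$, which converges exactly when $f(q) < q$.

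The technical heart — and the step I expect to be the main obstacle — is to correctly separate two kinds of transitions out of a $\|\cdot\|_1 = 2$ state: (i) returning to another $\|\cdot\|_1 = 2$ state (which can happen after one step or after a long excursion through $\{\|D\|_1 \ge 4\}$, and whose total probability is $p_2$ by definition), versus (ii) jumping to $D = 0$, which I believe happens with probability exactly $1/d^2$ conditioned on the geometry (the two walks must each take the step that closes the remaining distance-$2$ gap), and which then launches a fresh geometric cascade of $0 \leftrightarrow \|\cdot\|_1=2$ bounces. I would need to verify that $p_2$ as defined already excludes the direct-to-$0$ event — i.e., that "return to $\|\cdot\|_1 = 2$" and "hit $0$" are being counted in a mutually consistent, non-overlapping way — and pin down the constant $1/d^2$ from the lattice structure (after a distance-$2$ step $D_1$, the displacement is $\e_i - \e_j$ or $\e_i + \e_j$ type; returning to $0$ in one step requires the complementary increment, probability $1/d^2$). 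Once these probabilities are correctly identified, the rest is assembling a self-consistent linear equation for $g$ (and an auxiliary quantity for "value of an excursion started at a distance-$2$ state"), solving it, and checking the convergence condition $f(q) < q$ plus $dq^{2d-1} > 1$ guarantee finiteness; the monotone convergence already established in the main proof lets me work directly with the $n = \infty$ quantities $\#O, \#Z$.
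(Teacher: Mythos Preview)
Your outline is essentially the paper's own approach: decompose according to the successive times $\tau_1 < \tau_2 < \cdots$ at which $h_n := \|D_n\|_1 = 2$, use the strong Markov property and the symmetry of the distance-$2$ states to factor the expectation as a geometric series with ratio $f(q)/q$, and compute the initial factor coming from the first excursion. So the strategy is right, but several of your stated facts about the walk are wrong and would derail the computation if you carried them through.

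First, it is \emph{not} true that ``from $D_k = 0$ the walk always leaves to $\|D\|_1 = 2$ in one step.'' The difference $D_{k+1} - D_k = X_{k+1} - X_{k+1}'$ equals $0$ with probability $1/d$, so the walk stays at $0$ with probability $1/d$ and leaves with probability $1 - 1/d$. This is exactly what produces the factor $\frac{dq^{2d-1}}{dq^{2d-1}-1} = \sum_{j \ge 0}(q^{-(2d-1)}/d)^j$ in front, and the factor $(1/d - 1/d^2)(dq^{2d-1}-1)^{-1}$ inside $f(q)$: in both cases one is summing over the random (geometric) number of consecutive steps at $0$, each contributing weight $q^{-(2d-1)}$. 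Your proposal recognizes the geometric series but simultaneously asserts $D$ ``must move,'' which is inconsistent.

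Second, a distance-$2$ state $D_1 = X_1 - X_1'$ is always of the form $\e_i - \e_j$ with $i \neq j$; the form $\e_i + \e_j$ cannot occur since both walks step in the $+\e$ directions only. This matters because it is precisely this uniformity of the distance-$2$ states (all related by coordinate permutations) that makes the conditional law of the future given $\mathcal{F}_{\tau_k}$ depend only on $\|D_{\tau_k}\|_1 = 2$, hence makes $f(q)$ and $1-p_2$ genuine constants you can pull out.

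Third, and most importantly for the bookkeeping you flagged: $p_2$ as defined \emph{does} include paths that first visit $0$ and then return to distance $2$. The paper handles this by writing, conditionally on $h_1 = 2$,
\[
f(q) \;=\; \mathbb{E}\bigl[q^{-(2d-1)Z_2}\mathbf{1}_{\{h_2 = 0\}}\bigr] \;+\; \mathbb{P}(\tau_2 < \infty,\, h_2 \neq 0),
\]
noting that on $\{h_2 \neq 0\}$ the excursion never visits $0$ (so $Z_2 = 0$), and that the second term equals $p_2 - \mathbb{P}(h_2 = 0 \mid h_1 = 2) = p_2 - 1/d^2$ since $h_2 = 0$ a.s.\ forces an eventual return to $2$. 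The first term is then the geometric sum $(1/d - 1/d^2)(dq^{2d-1}-1)^{-1}$. So the subtraction of $1/d^2$ is not because $p_2$ excludes the through-$0$ event, but because you are replacing the unweighted through-$0$ contribution (which is $1/d^2$) by its correctly weighted version.
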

\begin{proof}
Let $h_n = ||S_n-S_n'||_1$ for $n \geq 0$, and note that $(S_n-S_n')_{n \geq 0}$ is a Markov chain on $\mathbb{Z}^d$ started at the origin. The sequence $(h_n)$ takes values in $\{0, 2, \dots\}$, but is not a Markov chain. However, computations give the following probabilities for it:
\begin{equation}
\begin{split} \label{eq: some probabilities of h_k}
& \PP(h_k=0 \mid h_{k-1}=0) = \frac{1}{d},\;\;\;\;\; \PP(h_k=2 \mid h_{k-1}=0) = 1-\frac{1}{d}, \text{ for } k \geq 1 \text{ and } \\
& \PP(h_k=2 \mid h_{k-1}=2) = \frac{3d-4}{d^2},\;\;\;\;\; \PP(h_k=0 \mid h_{k-1}=2) = \frac{1}{d^2} \text{ for } k \geq 2.\\
\end{split}
\end{equation}
Furthermore, since $p_2<1$, the strong Markov property implies $\#O<\infty$ a.s..

Let $(\mathcal{F}_n)$ be the filtration generated by $(X_k, X_k' : k= 1, \dots, n)$, and define the stopping times
\[
\begin{split}
&   \tau_0 =0, \;\;\;  \tau_1 = \inf \{n \geq 1 : h_n=2 \}, \text{ and generally}\\
& \tau_k = \inf \{n \geq \tau_{k-1}+1 : h_n=2\} \text{ for } k \geq 1.
\end{split}
\]
We then decompose the value of $\#Z$ according to ``excursions'' from the set $\{h_n = 2\}$. In other words, on the event $\{\#O = k\}$ for $k\geq 1$, we can write $\#Z = Z_1 + \cdots + Z_k$, where
\[
Z_i = \#\{n \in [\tau_{i-1}+1,\tau_i] : h_n = 0\}.
\]
(For this decomposition to hold, we need that $\#\{n \geq \tau_k+1 : h_n = 0\} = 0$. This holds a.s. on $\{\#O=k\}$, since after time $\tau_k$, the chain must move from $\{h_n=2\}$ to $\{h_n=4\}$, and never come back --- if it moves to $\{h_n=0\}$, it will a.s. move back to $\{h_n=2\}$ eventually by \eqref{eq: some probabilities of h_k}.)

Now we compute the expectation in the lemma iteratively, conditioning on each $\mathcal{F}_{\tau_k}$:
\begin{align}\label{eq: first equality of e^{alpha Z +beta O}}
&\EE q^{-\#O-(2d-1)\#Z} \nonumber \\
=~&\dsum_{k=1}^\infty \EE\left[q^{-k-(2d-1)(Z_1 + \dots + Z_k)} \mathbf{1}_{\{\#O=k\}}\right] \nonumber \\
=~&\dsum_{k=1}^\infty q^{-k} \EE\left[ \mathbb{E}\left[ q^{-(2d-1)(Z_1 + \dots + Z_k)} \mathbf{1}_{\{\tau_{k}<\infty, \tau_{k+1}=\infty\}} \mid \mathcal{F}_{\tau_k} \right] \right]  \nonumber\\
=~& \dsum_{k=1}^\infty \left( q^{-k} \EE \left[ q^{-(2d-1) (Z_1 + \cdots +Z_k)} \mathbf{1}_{\{\tau_k<\infty\} } \right]  \PP \left( \tau_{k+1}=\infty \mid \mathcal{F}_{\tau_k} \right) \right) \nonumber.
\end{align}
By the strong Markov property, $\mathbb{P}(\tau_{k+1} = \infty \mid \mathcal{F}_{\tau_k}) = \mathbb{P}(h_n \neq 2 \text{ for all } n \geq 2 \mid S_1-S_1' = x)$ for some (random) $x = S_{\tau_k}-S'_{\tau_k}$ in the set $\{z \in \mathbb{Z}^d : \|z\|_1 =2\}$. These $x$ are all of the form $\e_i-\e_j$ with $i\neq j$. By symmetry, these probabilities are the same for all $x$, and can be written as $\mathbb{P}(h_n \neq 2 \text{ for all } n \geq 2 \mid h_1=2) = 1-p_2$. (This argument is similar to the one that gives that $p_2<1$ implies $O<\infty$ a.s., stated below \eqref{eq: some probabilities of h_k}.) Therefore
\[
\mathbb{E} q^{-\#O-(2d-1)\#Z} = (1-p_2) \dsum_{k=1}^\infty q^{-k} \EE \left[ q^{-(2d-1) (Z_1 + \cdots +Z_k)} \mathbf{1}_{\{\tau_k<\infty\} } \right].
\]
Now conditioning on $\mathcal{F}_{\tau_{k-1}}$, this equals
\begin{equation}\label{eq: last_eqn}
(1-p_2) \dsum_{k=1}^\infty \left( q^{- k} \EE \left[  q^{-(2d-1) (Z_1+ \dots +Z_{k-1})} \mathbf{1}_{\{\tau_{k-1}<\infty\} } \EE \left[ q^{-(2d-1) Z_k} \mathbf{1}_{\{\tau_k < \infty\}} \mid \mathcal{F}_{\tau_{k-1}} \right]\right] \right).
\end{equation}
As before, by the strong Markov property, the term $\mathbb{E}\left[q^{-(2d-1)Z_k} \mathbf{1}_{\{\tau_k<\infty\}} \mid \mathcal{F}_{\tau_{k-1}}\right]$ for $k \geq 2$ is equal to $\mathbb{E}\left[ q^{-(2d-1)Z_2} \mathbf{1}_{\{\tau_2 < \infty\}} \mid S_1-S_1' = x\right]$ for some random $x = S_{\tau_{k-1}}-S'_{\tau_{k-1}}$ of the form $\e_i - \e_j$ for some $i \neq j$. These expectations are all the same by symmetry, so if we set
\[
f(q)= \EE \left[ q^{-(2d-1) Z_2} \mathbf{1}_{\{\tau_2 < \infty\}} \mid h_1=2\right],
\]
then \eqref{eq: last_eqn} gives us
\begin{align*}
&\mathbb{E}q^{-\#O-(2d-1)\#Z} \\
=~& (1-p_2) \left[ q^{-1} \mathbb{E}\left[ q^{-(2d-1)Z_1} \mathbf{1}_{\{\tau_0<\infty\}}\right] + \sum_{k=2}^\infty \left( q^{-k} f(q) \mathbb{E}\left[ q^{-(2d-1)(Z_1 + \dots + Z_{k-1})} \mathbf{1}_{\{\tau_{k-1}<\infty\}}\right] \right)\right].
\end{align*}
Last, we iterate the above procedure, conditioning successively on $\mathcal{F}_{\tau_{k-1}}, \mathcal{F}_{\tau_{k-2}}, \dots, \mathcal{F}_{\tau_{1}}$, to obtain
\[
\mathbb{E} q^{-\#O-(2d-1)\#Z} = (1-p_2) \EE \left[  q^{-(2d-1) Z_1} \mathbf{1}_{\{\tau_1 <\infty\} } \right]    \dsum_{k=1}^\infty \left( q^{- k} f(q)^{k-1} \right),
\]
or, because $\tau_1<\infty$ a.s. (see \eqref{eq: some probabilities of h_k}),
\begin{align}
\mathbb{E}q^{-\#O-(2d-1)\#Z}  &= (1-p_2) \EE q^{-(2d-1) Z_1}  \dsum_{k=1}^\infty \left( q^{-k} f(q)^{k-1} \right) \nonumber \\
&= (1-p_2) \mathbb{E}q^{-(2d-1)Z_1} (q- f(q))^{-1} \text{ if } f(q) < q.\label{eq: f(alpha) and expectation}
\end{align}

We now set out to compute the terms in \eqref{eq: f(alpha) and expectation}. Beginning with $f(q)$, because $h_2=0$ almost surely implies $\tau_2 < \infty$, we obtain
\begin{align}
f(q) &= \mathbb{E}\left[ q^{-(2d-1)Z_2} \mathbf{1}_{\{ \tau_2 < \infty, h_2 = 0\}} \mid h_1 = 2\right] + \mathbb{P}\left(  \tau_2 <\infty \text{ and } h_2 \neq 0 \mid h_1 = 2\right) \nonumber \\
&= \mathbb{E}\left[ q^{-(2d-1)Z_2} \mathbf{1}_{\{h_2=0\}} \mid h_1 = 2\right] + p_2 - \frac{1}{d^2}. \label{eq: f_q_middle}
\end{align}
Furthermore, using \eqref{eq: some probabilities of h_k}, the first term of \eqref{eq: f_q_middle} equals
\begin{align*}
&\frac{1}{d^2} \mathbb{E}\left[ q^{-(2d-1)Z_2} \mid h_1=2, h_2=0 \right] \\
=~& \frac{1}{d^2} \sum_{j=1}^\infty q^{-(2d-1)j} \mathbb{P}\left( h_2 = \dots = h_{j+1} = 0, h_{j+2} = 2 \mid h_1=2, h_2=0\right) \\
=~& \frac{1}{d^2} \sum_{j=1}^\infty q^{-(2d-1)j} \left( \frac{1}{d}\right)^{j-1} \left( 1-\frac{1}{d}\right) \\
=~& \frac{1}{d}\left( 1- \frac{1}{d} \right) \left(dq^{2d-1}-1\right)^{-1} \text{ if } dq^{2d-1} > 1.
\end{align*}
Putting this in \eqref{eq: f_q_middle}, we obtain
\begin{equation}\label{eq: new_f_q}
f(q) = \left( \frac{1}{d} - \frac{1}{d^2}\right)\left( dq^{2d-1}-1\right)^{-1} + p_2 - \frac{1}{d^2} \text{ when } dq^{2d-1} > 1.
\end{equation}

For the other term in \eqref{eq: f(alpha) and expectation}, we similarly compute when $dq^{2d-1} > 1$
\begin{align*}
\mathbb{E}q^{-(2d-1)Z_1} &= \sum_{j=0}^\infty q^{-(2d-1)j} \mathbb{P}(h_1 = \dots = h_j = 0, h_{j+1}=2) \\
&= \left( 1- \frac{1}{d}\right)\sum_{j=0}^\infty q^{-(2d-1)j} \left( \frac{1}{d}\right)^{j} \\
&= \left( 1- \frac{1}{d}\right)\frac{dq^{2d-1}}{dq^{2d-1}-1}.
\end{align*}
We place this and \eqref{eq: new_f_q} into \eqref{eq: f(alpha) and expectation} to complete the proof.
\end{proof}

\section{Proofs of Corollaries~\ref{corollary p_{shield}} and \ref{p_{shield >p_c} by numerical}}\label{sec: proofs_of_corollaries}

We will use the following result in the proofs of both corollaries.
\begin{lemma}\label{lem: p_2_p_d}
For $d \geq 4$, one has $p_2<1$. Furthermore, if we define
\[
p_d = \mathbb{P}(S_n = S_n' \text{ for some } n \geq 1),
\]
then
\begin{enumerate}
\item $p_2 = \frac{(d^2+1)p_d-d-1}{d^2p_d-d}$, and
\item 
\begin{align*}
p_d &\leq \frac{1}{d} + \left( 1- \frac{1}{d} \right) \frac{1}{d^2} + \frac{1}{d^2}\left( 1- \frac{1}{d}\right)\left( \frac{3d-4}{d^2}\right) \\
&+ \frac{1}{d^2} \left( \left( \frac{3d-4}{d^2}\right)^2 + \left( \frac{d^2-3d+3}{d^2}\right)\left(\frac{4}{d^2}\right)\right) \\
&+\sum_{k=5}^d \frac{k!}{d^k} + \sum_{j=1}^\infty \left(\frac{1}{d}\right)^{jd-1}\frac{(jd)!}{(j!)^d}.
\end{align*}
\end{enumerate}
\end{lemma}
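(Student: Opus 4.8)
# Proof proposal for Lemma~\ref{lem: p_2_p_d}

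The plan is to reduce everything to understanding the difference walk $D_n = S_n - S_n'$, which is a lazy random walk on $\mathbb{Z}^d$: with probability $1/d$ it stays put (when $X_n = X_n'$) and otherwise it jumps by $\e_i - \e_j$ for $i \neq j$, so it moves into the ``$\|\cdot\|_1 = 2$'' shell. The three quantities $p_2$, $p_d$, and the return probabilities are all naturally expressed through this chain, and the transition probabilities in \eqref{eq: some probabilities of h_k} for $(h_n)$ are the elementary inputs.

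\emph{Part (1): the identity $p_2 = \frac{(d^2+1)p_d - d - 1}{d^2 p_d - d}$.} I would relate $p_2$ and $p_d$ by a first-step / renewal decomposition. By definition $p_d = \mathbb{P}(D_n = 0 \text{ for some } n \geq 1)$. Condition on the first step. With probability $1/d$ we have $D_1 = 0$, which already realizes the event. With probability $1 - 1/d$ we have $h_1 = 2$; from there, $D$ returns to $0$ iff the excursion in the $h$-chain from level $2$ ever hits $0$ before escaping, and upon hitting $\{h=0\}$ the strong Markov property restarts the whole problem (return to $0$ thereafter has probability $p_d$ again). So if I write $r := \mathbb{P}(h_n = 0 \text{ for some } n \geq 2 \mid h_1 = 2)$, I get the relation $p_d = \tfrac1d + (1-\tfrac1d)\big(r + (1-r)\cdot 0\big)$... but I must be careful: after hitting $0$ the chain may return again, so actually the event ``$D$ ever returns to $0$, having started with $h_1 = 2$'' equals $r$ (once you hit $0$ you have returned). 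Hence $p_d = \tfrac1d + (1-\tfrac1d) r$, giving $r = \frac{p_d - 1/d}{1 - 1/d} = \frac{d p_d - 1}{d-1}$. Next I relate $r$ to $p_2 = \mathbb{P}(h_n = 2 \text{ for some } n \geq 2 \mid h_1 = 2)$ by decomposing on whether the first return to $\{0,2\}\setminus\{\text{current}\}$... more cleanly: starting from $h_1 = 2$, consider the first time $n \geq 2$ with $h_n \in \{0, 2\}$ again after possibly leaving. Actually the right move is to decompose $p_2$ itself: from $h_1 = 2$, either the next visit to $\{h \le 2\}$ after leaving is to $\{h = 2\}$ or to $\{h = 0\}$; from $\{h=0\}$ one a.s. returns to $\{h=2\}$ (as noted in the excerpt below \eqref{eq: some probabilities of h_k}). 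Carefully bookkeeping one-step transitions $\mathbb{P}(h_k = 2 \mid h_{k-1} = 2) = \frac{3d-4}{d^2}$, $\mathbb{P}(h_k = 0 \mid h_{k-1} = 2) = \frac{1}{d^2}$, and $\mathbb{P}(h_k = 4 \mid h_{k-1} = 2) = 1 - \frac{3d-3}{d^2}$, together with $\mathbb{P}(\text{hit } 0 \mid h_1 = 2) = r$, yields a linear system in $p_2$ and $r$, which I solve and then substitute $r = \frac{dp_d - 1}{d-1}$ to obtain the stated formula. (The bound $p_2 < 1$ for $d \geq 4$ then follows from $p_d < 1$ for $d \geq 4$, which I address below, plus checking the rational function stays $< 1$; equivalently $p_2 < 1 \iff p_d < 1$ by the monotone formula.)

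\emph{Part (2): the bound on $p_d$.} Here $p_d = \mathbb{P}(D_n = 0 \text{ for some } n \geq 1)$ is at most the expected number of returns' worth of terms, or more simply I bound it by summing, over $n \geq 1$, the probability that the \emph{first} return happens at time $n$, and then crudely upper bound. The displayed bound has the shape: an exact/near-exact treatment of the first few values of $n$ (the terms with $1/d$, $(1-1/d)/d^2$, the $\frac{3d-4}{d^2}$ and $\frac{d^2-3d+3}{d^2}$ pieces correspond to $n = 1, 2, 3, 4$ return-at-time-$n$ probabilities computed from \eqref{eq: some probabilities of h_k}), followed by two tail sums. The sum $\sum_{k=5}^d \frac{k!}{d^k}$ should come from bounding $\mathbb{P}(D_n = 0)$ for $5 \le n \le d$ by the probability that $(X_1,\dots,X_n)$ and $(X_1',\dots,X_n')$ are two sequences using the same multiset of increments — for $n \le d$ the dominant/simplest bound is that they are permutations of each other, giving at most $n!/d^n$ ways out of $d^{2n}$... wait, I need $\mathbb{P}(S_n = S_n') = \sum_{\text{multisets}} \mathbb{P}(S_n = v)^2 \le \max_v \mathbb{P}(S_n = v)$, and for $n \le d$ the max is achieved at a point with all-distinct coordinates contributing $n!/d^n$. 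The last sum $\sum_{j=1}^\infty (1/d)^{jd-1} \frac{(jd)!}{(j!)^d}$ handles $n \ge d+1$: write $n$, group in blocks, and bound $\mathbb{P}(S_n = S_n') \le \max_v \mathbb{P}(S_n = v)$ where for $n = jd$ the maximizer is the balanced point $(j,\dots,j)$ with multinomial weight $\frac{(jd)!}{(j!)^d} d^{-jd}$, and for $n$ not a multiple of $d$ one bounds by the nearest block size — the $d^{+1}$ in the exponent absorbs the slack and the geometric-type decay of $\frac{(jd)!}{(j!)^d} d^{-jd} \sim (2\pi j)^{-(d-1)/2} d^{1/2}$ makes the sum converge (and be $<$ something small), which also re-proves $p_d < 1$ for $d$ large; combined with the explicit small-$n$ terms one checks numerically/analytically that the whole bound is $< 1$ for $d \ge 4$.

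\emph{Main obstacle.} The genuinely fiddly part is Part (1): getting the renewal/first-step decomposition exactly right so that the algebra collapses to $p_2 = \frac{(d^2+1)p_d - d - 1}{d^2 p_d - d}$. One has to be careful that ``$\tau_2 < \infty$'' type events, excursions of $(h_n)$ that dip to $0$ and come back, and the strong Markov restarts are all accounted for without double counting — in particular using that from $h = 0$ return to $h = 2$ is certain, but from $h = 2$ escape to $h = 4$ (never to return) has positive probability. The random-walk tail estimates in Part (2) are technically routine (Stirling plus ``local CLT''-style crude bounds via $\mathbb{P}(S_n = S_n') \le \max_v \mathbb{P}(S_n = v)$), the only care being the treatment of $n$ not divisible by $d$, which is handled by the slack built into the exponent $jd - 1$.
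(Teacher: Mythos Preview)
Your Part (2) sketch matches the paper's argument essentially step for step: exact or near-exact computation of $\mathbb{P}(\tau=k)$ for $k=1,2,3,4$ via the transition probabilities in \eqref{eq: some probabilities of h_k}, then $\mathbb{P}(\tau=k)\le \mathbb{P}(S_k=S_k')\le \max_{x\in H_k}\mathbb{P}(S_k=x)$, with the multinomial maxima $k!/d^k$ for $k\le d$ and $(jd)!/((j!)^d d^{jd})$ on blocks $jd<k\le (j+1)d$ (the factor $d$ from the block length produces the exponent $jd-1$). Nothing to add there.

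Your Part (1) route is genuinely different from the paper's. The paper does \emph{not} set up a first-passage system in $p_2$ and your $r$; instead it computes $\mathbb{E}(1+\#Z)$ via the one-step recursion $\mathbb{P}(h_k=0)=\tfrac1d\mathbb{P}(h_{k-1}=0)+\tfrac{1}{d^2}\mathbb{P}(h_{k-1}=2)$, obtaining $\mathbb{E}(1+\#Z)=1+\tfrac1d\mathbb{E}(1+\#Z)+\tfrac{1}{d^2}\mathbb{E}\#O$, and then inserts the geometric laws $\mathbb{E}\#Z=p_d/(1-p_d)$, $\mathbb{E}\#O=1/(1-p_2)$ to get a single linear relation between $p_d$ and $p_2$. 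This avoids any excursion bookkeeping entirely. Your renewal approach also works and gives the same formula, but the ``linear system'' you allude to only closes once you make one observation explicit: since $|h_n-h_{n-1}|\in\{0,2\}$, from level $2$ the event ``hit $0$ before returning to $2$'' can occur \emph{only} via the single step $h_2=0$, with probability $1/d^2$ (from $h\ge 4$ one must pass through $2$ to reach $0$). That yields $r=\tfrac{1}{d^2}+(p_2-\tfrac{1}{d^2})\,r$, which together with your $r=(dp_d-1)/(d-1)$ solves to the stated expression. Without this step-size observation, the one-step probabilities alone do not give a closed system because the return probability from level $4$ is an extra unknown.

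For $p_2<1$ (equivalently $p_d<1$) when $d\ge 4$, the paper does not verify that the displayed upper bound is numerically below $1$. It instead shows $\mathbb{E}\#Z=\sum_k \mathbb{P}(h_k=0)<\infty$ directly from the same multinomial/Stirling tail estimate (the series $\sum_j j^{(1-d)/2}$ converges for $d\ge 4$), which forces $p_d<1$ via the geometric law. This is cleaner than your proposed ``check the bound is $<1$'', which would need separate work for small $d$.
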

\begin{proof}
We begin with item 1. We continue with the sequence $(h_n)$ from the previous section, where $h_n = \|S_n-S_n'\|_1$. As before, let
\[
\#Z_n = \#\{k=1, \dots, n: h_k=0\} \text{ and } \#O_n = \#\{k=1, \dots, n : h_k = 2\}.
\]
Then, recalling the probabilities in \eqref{eq: some probabilities of h_k}, we compute
\begin{align*}
\EE (1+\#Z_n)& = 1+ \dsum_{k=1}^n \PP(h_k=0) \\
&= 1 + \dsum_{k=1}^n \bigg( \PP(h_k=0, h_{k-1}=0) +  \PP(h_k=0, h_{k-1} =2 )\bigg)\\
& = 1 + \dsum_{k=1}^n \bigg( \frac{1}{d} \PP(h_{k-1}=0) +\frac{1}{d^2} \PP(h_{k-1}=2) \bigg)\\
&=1+\frac{1}{d} \EE (1+\#Z_{n-1}) + \frac{1}{d^2} \EE \#O_{n-1}.
\end{align*}
By the monotone convergence theorem, for $\#Z = \lim_{n \to \infty} \#Z_n$ and $\#O = \lim_{n \to \infty} \#O_n$, we have
\begin{equation} \label{expectation of Z and O}
\EE (1+\#Z) =  1+\frac{1}{d} \EE (1+\#Z)  + \frac{1}{d^2} \EE \#O.
\end{equation}

To write \eqref{expectation of Z and O} in terms of $p_2$ and $p_d$, we note that by the strong Markov property,
\begin{equation}\label{eq: p_d_1}
\mathbb{P}(\#Z=k) = p_d^k (1-p_d) \text{ for } k \geq 0, \text{ and }
\end{equation}
\begin{align}
\mathbb{P}(\#O=k) &= (1-p_2)p_2^{k-1} \mathbb{P}(h_k=2 \text{ for some } k \geq 1)  \nonumber \\
&= (1-p_2)p_2^{k-1} \text{ for } k \geq 1 \label{eq: p_2_1}.
\end{align}
Therefore 
\[
\mathbb{E}\#Z = \frac{p_d}{1-p_d} \text{ and } \mathbb{E}\#O = \frac{1}{1-p_2},
\]
and \eqref{expectation of Z and O} becomes
\[
\frac{1}{1-p_d} = 1 + \frac{1}{d(1-p_d)} + \frac{1}{d^2(1-p_2)}.
\]
This implies the first item of the lemma.

For the second item, we define the stopping time
\[
\tau =\inf \{k \geq 1 : h_k = 0\},
\]
so that $p_d = \PP( \tau < \infty)$. By a straightforward calculation,
\begin{equation}\label{eq: tau_upper_0}
\begin{split}
 \PP(\tau=1) &= \frac{1}{d}, \text{ and }\\
 \PP(\tau =2) &= \mathbb{P}(h_2=0\mid h_1 = 2) \mathbb{P}(h_1=2) \\
&=\bigg(1-\frac{1}{d} \bigg)\frac{1}{d^2}.
\end{split}
\end{equation}
We will need to compute both $\mathbb{P}(\tau=3)$ and $\mathbb{P}(\tau=4)$, and these are a little more complicated. We first claim that
\begin{equation}\label{eq: tau_upper_3}
\mathbb{P}(\tau=3) = \frac{1}{d^2}\left( \frac{3d-4}{d^2}\right)\left( 1-\frac{1}{d} \right).
\end{equation}
To show this use \eqref{eq: some probabilities of h_k} to write
\begin{align*}
\mathbb{P}(\tau=3) &= \mathbb{P}(h_1=2)\mathbb{P}(h_2=2 \mid h_1=2) \mathbb{P}(h_3=0 \mid h_1=2,h_2=2) \\
&= \left( 1- \frac{1}{d} \right) \frac{3d-4}{d^2} \mathbb{P}(h_3=0 \mid h_1=2,h_2=2).
\end{align*}
The last probability is written using the Markov property at time 2 as 
\[
\frac{\mathbb{E}\left[ \mathbb{P}(h_3=0 \mid \mathcal{F}_2) \mathbf{1}_{\{h_1=2,h_2=2\}}\right]}{\mathbb{P}(h_1=2,h_2=2)} = \frac{\mathbb{E}\left[ \mathbb{P}(h_2=0 \mid S_1-S_1'=x) \mathbf{1}_{\{h_1=2,h_2=2\}}\right]}{\mathbb{P}(h_1=2,h_2=2)},
\]
where $x$ is the (random) value of $S_2-S_2'$, which must be of the form $\e_i-\e_j$ for some $i\neq j$. These probabilities are constant as $x$ varies, and are equal to $\frac{1}{d^2}$. Therefore we obtain $\mathbb{P}(h_3=0 \mid h_1=2, h_2=2) = \frac{1}{d^2}$, and this shows \eqref{eq: tau_upper_3}.

The situation with $\{\tau=4\}$ is somewhat worse than that for $\{\tau=3\}$, and the form is
\begin{equation}\label{eq: tau_upper_4}
\mathbb{P}(\tau=4) \leq \frac{1}{d^2}\left( \left( \frac{3d-4}{d^2}\right)^2 + \left( \frac{d^2-3d+3}{d^2} \right)\left( \frac{4}{d^2}\right)\right) \left( 1-\frac{1}{d} \right).
\end{equation}
The analysis splits into 2 cases:
\begin{enumerate}
\item $(h_0, \dots, h_4) = (0, 2, 2, 2, 0)$,
\item $(h_0, \dots, h_4) = (0,2,4,2,0)$.
\end{enumerate}
The first case is computed exactly as we did for $\{\tau=3\}$: we obtain the form
\begin{equation}\label{eq: tau_upper_4_case_1}
\left( 1- \frac{1}{d} \right) \left( \frac{3d-4}{d^2}\right)^2 \frac{1}{d^2}.
\end{equation}
For the second, we get
\[
\left( 1-\frac{1}{d} \right) \mathbb{P}(h_2=4 \mid h_1=2) \mathbb{P}(h_3=2 \mid h_1=2,h_2=4) \frac{1}{d^2}.
\]
By \eqref{eq: some probabilities of h_k}, 
\[
\mathbb{P}(h_2=4 \mid h_1=2) = 1- \frac{1}{d^2} - \frac{3d-4}{d^2} = \frac{d^2-3d+3}{d^2},
\]
so we obtain
\begin{equation}\label{eq: tau_upper_4_case_2}
\left( 1- \frac{1}{d} \right) \frac{d^2-3d+3}{d^2} \mathbb{P}(h_3=2 \mid h_1=2, h_2=4) \frac{1}{d^2}.
\end{equation}
For the other term, we again use the Markov property to write it as
\[
\frac{\mathbb{E}\left[ \mathbb{P}(h_3=2 \mid S_2-S_2' = x) \mathbf{1}_{\{h_1=2, h_2=4\}}\right]}{\mathbb{P}(h_1=2,h_2=4)},
\]
where $x$ is the (random) value of $S_2-S_2'$. Up to symmetry, there are 3 different values of $x$:
\begin{enumerate}
\item[(A)] $2\e_i - 2\e_j$ for some $i\neq j$,
\item[(B)] $2\e_i - \e_j - \e_\ell$ for some distinct $i,j,\ell$, and
\item[(C)] $\e_i+\e_j-\e_\ell-\e_m$ for some distinct $i,j,\ell,m$.
\end{enumerate}
In case (A), $X_3$ must be $\e_j$ and $X_3'$ must be $\e_i$ to make $h_3=2$. This gives a probability of $\frac{1}{d^2}$. In case (B), $X_3$ must be $\e_j$ or $\e_\ell$ and $X_3'$ must be $e_i$, giving a probability of $\frac{2}{d^2}$. In case (C), $X_3$ must be $\e_\ell$ or $\e_m$ and $X_3'$ must be $\e_i$ or $\e_j$, giving a probability of $\frac{4}{d^2}$. In all cases, the probability is bounded above by $\frac{4}{d^2}$. Plugging this into \eqref{eq: tau_upper_4_case_2} gives an upper bound of
\[
\left( 1- \frac{1}{d} \right) \frac{d^2-3d+3}{d^2} \cdot \frac{4}{d^2} \cdot \frac{1}{d^2}.
\]
If we add this to \eqref{eq: tau_upper_4_case_1}, we obtain the claimed bound in \eqref{eq: tau_upper_4}.

For $\mathbb{P}(\tau=k)$ with $k \geq 5$, we use
\[
\PP(\tau=k) \leq \PP(S_k=S_k') \leq \max_{x \in H_k} \PP(S_k=x),
\]
where we recall that $H_k$ was defined in \eqref{eq: H_n_def}. Following \cite[p.~155]{MR684285}, for $1 \leq k \leq d$, the maximum above is attained when $x=\e_1 + \dots + \e_k$, so
\begin{equation}\label{eq: tau_upper_1}
\PP(\tau =k) \leq \max_{x \in H_k} \PP(S_k = x) \leq \frac{k!}{d^k} \;\;\text{for $1 \leq k \leq d$.}
\end{equation}

To bound $\mathbb{P}(\tau=k)$ for $k > d$, we first claim that $\max_{x \in H_j} \mathbb{P}(S_j=x)$ is nonincreasing in $j$. Indeed, if this were not true, then we could find $j$ such that $\max_{x \in H_j} \mathbb{P}(S_j = x) > \max_{y \in H_{j-1}} \mathbb{P}(S_{j-1} = y)$. Choosing $x$ corresponding to the maximum in $H_j$, we could compute
\begin{align*}
\mathbb{P}(S_j = x) = \sum_{y \in H_{j-1}} \mathbb{P}(S_{j-1} = y) \mathbb{P}(S_j=x \mid S_{j-1}=y) &< \mathbb{P}(S_j=x) \sum_{y \in H_{j-1}} \mathbb{P}(X_j = x-y) \\
&= \mathbb{P}(S_j=x),
\end{align*}
a contradiction. So, using the claim, if $k \geq jd$ for $j \geq 1$, we estimate, writing $y=(y_1, \dots, y_d)$,
\begin{align}
\PP(\tau=k) \leq \PP(S_k=S_k') \leq \max_{x \in H_k} \PP(S_k=x) \leq \max_{y \in H_{jd}} \PP(S_{jd}=y) &= \max_{y \in H_{jd}} \left[ \frac{1}{d^{jd}} \cdot \frac{(jd)!}{y_1! \cdot \dots \cdot y_d!}\right] \nonumber \\
&\leq \left( \frac{1}{d}\right)^{jd} \frac{(jd)!}{(j!)^d}. \label{eq: tau_upper_6}
\end{align}
If we write
\[
p_d = \mathbb{P}(\tau=1) + \mathbb{P}(\tau=2) + \mathbb{P}(\tau=3) + \mathbb{P}(\tau=4) + \sum_{k=5}^d \mathbb{P}(\tau=k) + \sum_{j=1}^\infty \sum_{k=jd+1}^{(j+1)d} \mathbb{P}(\tau =k),
\]
and use \eqref{eq: tau_upper_0} for the first and second terms, \eqref{eq: tau_upper_3} for the third, \eqref{eq: tau_upper_4} for the fourth, \eqref{eq: tau_upper_1} for the first sum, and \eqref{eq: tau_upper_6} for the last, we obtain the claimed inequality in item 2.

%
%If we combine this with \eqref{eq: tau_upper_0} and \eqref{eq: tau_upper_1}, we obtain the bound
%\begin{equation*}\label{eq: p_d_upper_star}
%\begin{split}
%p_d &= \mathbb{P}(\tau=1) + \mathbb{P}(\tau = 2) + \mathbb{P}(3 \leq \tau \leq d) + \mathbb{P}(d < \tau < \infty) \\
%&\leq \frac{1}{d} + \left( 1-\frac{1}{d} \right) \frac{1}{d^2} + \sum_{k=3}^d \frac{k!}{d^k} + \sum_{j=1}^\infty \sum_{k=jd+1}^{(j+1)d} \left( \frac{1}{d} \right)^{jd} \frac{(jd)!}{(j!)^d}.
%\end{split}
%\end{equation*}
%By Stirling's approximation with $e^{-\frac{1}{13}} \leq \frac{n!}{n^n e^{-n} \sqrt{2\pi n}} \leq 1$ from \cite[Eq.~(9.15)]{MR0228020}, we have
%\[
%\left( \frac{1}{d} \right)^{jd} \frac{(jd)!}{(j!)^d} \leq \sqrt{2\pi d} \left( \frac{e^{\frac{1}{13}}}{\sqrt{2\pi}} \right)^d j^{\frac{1-d}{2}},
%\]
%and so
%\[
%p_d  \leq \frac{1}{d} + \bigg(1-\frac{1}{d}\bigg)\frac{1}{d^2} + \sum_{k=3}^d \frac{k!}{d^k}  +  d \sqrt{2\pi d}  \left(\frac{e^{\frac{1}{13}}}{ \sqrt{2\pi}} \right)^d  \dsum_{j=1}^\infty j^{\frac{1-d}{2}},
%\]
%which is item 2.

Finally, we show that for $d \geq 4$, one has $p_2 < 1$. It suffices, in fact, to show that $p_d<1$ since $p_2 = 1 - \frac{1-p_d}{d^2p_d-d}$, and if $p_d<1$ then the numerator is positive (the denominator is always positive since $p_d > \mathbb{P}(h_1=0) = \frac{1}{d}$). To show $p_d<1$, it is enough by \eqref{eq: p_d_1} to show that $\mathbb{E}\#Z<\infty$ and so we estimate as above, using Stirling's approximation with $ 1 \leq  \frac{n!}{n^n e^{-n} \sqrt{2\pi n}} \leq e^{\frac{1}{12n}} $ for all $n \geq 1$ from \cite[Eq.~(9.15)]{MR0228020} to obtain
\[
\left( \frac{1}{d} \right)^{jd} \frac{(jd)!}{(j!)^d} \leq \sqrt{2\pi d}\frac{e^{\frac{1}{12d}}}{ (2\pi )^{\frac{d}{2}}}  j^{\frac{1-d}{2}}.
\]
and
\begin{align*}
\mathbb{E}\#Z = \sum_{k=1}^\infty \mathbb{P}(h_k=0) \leq \sum_{k=1}^\infty \max_{x \in H_k} \mathbb{P}(S_k=x) &\leq \sum_{k=1}^d \frac{k!}{d^k} + \sum_{j=1}^\infty \sum_{k=jd+1}^{(j+1)d} \frac{(jd)!}{(j!)^d} \\
& \leq \sum_{k=1}^d \frac{k!}{d^k} + d\sqrt{2\pi d}\frac{e^{\frac{1}{12d}}}{ (2\pi )^{\frac{d}{2}}} \sum_{j=1}^\infty j^{\frac{1-d}{2}}.
\end{align*}
This is finite for $d \geq 4$.
\end{proof}

 \begin{proof} [Proof of Corollary \ref{corollary p_{shield}}]
 From consequence \eqref{eq: thm_1_consequence} of Theorem \ref{upperbound of shield},
\begin{equation*}
\begin{split}
\limsup_{d \rightarrow \infty} \frac{p_{shield}(d)}{\frac{\log d}{2d}} \leq 1.
\end{split}
\end{equation*}

For the lower bound, we put $p = p(d) = \frac{a \log d}{2d}$ for $a \in (0,1)$, and check that $p$ satisfies the conditions of Theorem~\ref{lowerbound of shield}. First, $p_2<1$ for large $d$ by Lemma~\ref{lem: p_2_p_d}. Next, one has
\begin{align*}
(1-p)^{2d-1} = \exp\left( (2d-1) \log \left( 1- \frac{a \log d}{2d}\right) \right) &= \exp\left( -a (1+o(1)) \log d \right) \\
&= d^{-a + o(1)} \text{ as } d \to \infty.
\end{align*}
This is $> \frac{1}{d}$ for large $d$, so the first assumption of Theorem~\ref{lowerbound of shield} holds. For the second, the calculation is similar: its left side equals
\[
\frac{1}{1-\frac{a \log d}{2d}} \left( p_2 - \frac{1}{d^2} + \frac{1}{d}\left( 1- \frac{1}{d}\right) ( d^{1-a+o(1)} - 1)^{-1} \right),
\]
which is $p_2(1+o(1))$ as $d \to \infty$. Since $p_2<1$, we see this the left side is $<1$ for large $d$, and this verifies item 2. In conclusion, we find that $p_{shield}(d) \geq \frac{a \log d}{2d}$ for large $d$, whenever $a<1$, and so
\[
\liminf_{d \to \infty} \frac{p_{shield}(d)}{\frac{\log d}{2d}} \geq 1.
\]
\end{proof}

\begin{proof} [Proof of Corollary~\ref{p_{shield >p_c} by numerical}] 
To find values of $d$ for which $p_{shield}(d) > p_c(d)$, we will need a useful upper bound for $p_c$. Unfortunately, we only have explicit upper bounds for the threshold of oriented percolation. We define the probability
\[
\rho_d = \PP( S_k=S_k', \; S_{k+1}=S_{k'+1} \text{ for some $k \geq 0$}), 
\]
and use \cite[Eq.~(1.1)]{MR684285}, which states that the oriented threshold satisfies $\vec{p}_c(d) \leq \rho_d$. Since $p_c(d) \leq \vec{p}_c(d)$, we obtain $p_c(d) \leq \rho_d$.

Define the stopping time $\hat \tau=\inf \{k \geq 0 : S_k =S_k', S_{k+1}=S_{k+1}'\}$, so that $\rho_d = \dsum_{k=0}^\infty \PP(\hat \tau = k)$. By similar calculations to those in the proof of Corollary~\ref{corollary p_{shield}} (the following are listed in \cite[p.~155-156]{MR684285}), 
\[
 \PP(\hat \tau =0) =\frac{1}{d}, \;\;\;  \PP(\hat \tau =1)=0,~\PP(\hat \tau=2)=\frac{1}{d^3}-\frac{1}{d^4},
\]
for $l = 1, \dots, d$,
\[
\mathbb{P}(l \leq \hat \tau \leq d) \leq \sum_{k=l}^d \frac{1}{d} \cdot \frac{k!}{d^k},
\]
and
\[
\mathbb{P}(d < \hat \tau < \infty) \leq  \sum_{j=1}^\infty \left( \frac{1}{d} \right)^{jd} \frac{(jd)!}{(j!)^d}.
\]

We will again want to separate out the cases $\hat \tau=k$ for $k=3,4$. Doing calculations similar to those in the proof of Lemma~\ref{lem: p_2_p_d}, we obtain
\begin{align*}
\mathbb{P}(\hat \tau = 3) &= \left( 1- \frac{1}{d} \right) \frac{3d-4}{d^2} \cdot \frac{1}{d^3} \\
\mathbb{P}(\hat \tau = 4) &\leq \frac{1}{d^3} \left[ \left( \frac{3d-4}{d^2} \right)^2 + \left( \frac{d^2-3d+3}{d^2} \right) \left( \frac{4}{d^2}\right) + \left( \frac{1}{d^2} \right) \left( 1- \frac{1}{d} \right) \right] \left( 1- \frac{1}{d} \right).
\end{align*}
(In the first case, the relevant $(h_n)$ vector is $(h_0, \dots, h_4) = (0,2,2,0,0)$ and for the second case, they are $(h_0, \dots, h_5) = (0,2,2,2,0,0)$, $(0,2,4,2,0,0)$, and $(0,2,0,2,0,0)$.)

Combining these estimates, we obtain
%\begin{equation}\label{eq: p_c_upper_bound}
%\begin{split}
%p_c \leq \rho_d & \leq  \frac{1}{d} + \frac{1}{d^3}-\frac{1}{d^4} +  \frac{1}{d^3} \left( \frac{3d-4}{d^2} \right) \left( 1- \frac{1}{d} \right) \\
%&+ \frac{1}{d^3} \left[ \left( \frac{3d-4}{d^2} \right)^2 + \left( \frac{d^2-3d+3}{d^2}\right) \left( \frac{4}{d^2} \right) + \left( \frac{1}{d^2} \right) \left( 1-\frac{1}{d} \right) \right] \left( 1-\frac{1}{d} \right) \\
%&+\dsum_{k=5}^d \frac{k!}{d^{k+1}}  + \sqrt{2\pi d}\frac{e^{\frac{1}{12d}}}{ (2\pi )^{\frac{d}{2}}}  \sum_{j=1}^\infty j^{\frac{1-d}{2}}   =: g(d)
%\end{split}
%\end{equation}
\begin{equation}\label{eq: p_c_upper_bound}
\begin{split}
p_c \leq \rho_d & \leq  \frac{1}{d} + \frac{1}{d^3}-\frac{1}{d^4} +  \frac{1}{d^3} \left( \frac{3d-4}{d^2} \right) \left( 1- \frac{1}{d} \right) \\
&+ \frac{1}{d^3} \left[ \left( \frac{3d-4}{d^2} \right)^2 + \left( \frac{d^2-3d+3}{d^2}\right) \left( \frac{4}{d^2} \right) + \left( \frac{1}{d^2} \right) \left( 1-\frac{1}{d} \right) \right] \left( 1-\frac{1}{d} \right) \\
&+\dsum_{k=5}^d \frac{k!}{d^{k+1}}  + \sum_{j=1}^\infty \left( \frac{1}{d}\right)^{jd} \frac{(jd)!}{(j!)^d}   =: g(d)
\end{split}
\end{equation}

To give an explicit lower bound on $p_{shield}(d)$, we will show that for $p = g(d)$, the two conditions of Theorem~\ref{lowerbound of shield} hold. That is, we will show that
\[
g(d) < 1 - \left( \frac{1}{d} \right)^{\frac{1}{2d-1}}
\]
and
\[
\frac{1}{1-g(d)} \left( p_2 - \frac{1}{d^2} + \frac{1}{d}\left( 1- \frac{1}{d} \right) (d (1-g(d))^{2d-1}-1)^{-1} \right) < 1.
\]
For any $d$ such that these inequalities hold, we must have $p_{shield}(d) > p_c(d)$. Indeed, since the left side of either inequality is a continuous function of $g(d)$, they will also hold for some number $\hat p > g(d)$ sufficiently close to $g(d)$, and we will have $p_c \leq g(d) < \hat p \leq p_{shield}(d)$.

To show the two inequalities above, we recall Lemma~\ref{lem: p_2_p_d} and the bounds contained therein. From there, we define
%\begin{align*}
%B(d) &= \frac{1}{d} + \left( 1- \frac{1}{d} \right) \frac{1}{d^2} + \frac{1}{d^2} \left( \frac{3d-4}{d^2} \right) \left( 1-\frac{1}{d} \right) \\
%&+ \frac{1}{d^2} \left[ \left( \frac{3d-4}{d^2} \right)^2 + \left( \frac{d^2-3d+3}{d^2} \right)\left( \frac{4}{d^2} \right) \right] \left( 1-\frac{1}{d} \right) + \sum_{k=5}^d \frac{k!}{d^k} + d\sqrt{2\pi d}\frac{e^{\frac{1}{12d}}}{ (2\pi )^{\frac{d}{2}}} \sum_{j=1}^\infty j^{\frac{1-d}{2}}
%\end{align*}
\begin{align*}
B(d) &= \frac{1}{d} + \left( 1- \frac{1}{d} \right) \frac{1}{d^2} + \frac{1}{d^2} \left( \frac{3d-4}{d^2} \right) \left( 1-\frac{1}{d} \right) \\
&+ \frac{1}{d^2} \left[ \left( \frac{3d-4}{d^2} \right)^2 + \left( \frac{d^2-3d+3}{d^2} \right)\left( \frac{4}{d^2} \right) \right] \left( 1-\frac{1}{d} \right) + \sum_{k=5}^d \frac{k!}{d^k} +  \sum_{j=1}^\infty \left( \frac{1}{d}\right)^{jd-1} \frac{(jd)!}{(j!)^d}
\end{align*}
and
\[
t(x) = \frac{(d^2+1)x-d-1}{d^2x-d}.
\]
(The function $t$ is defined so that $t(p_d) = p_2$.) Because $t(x) = 1- \frac{1-x}{d^2x-d}$, it is monotone nondecreasing for $x > 1/d$. Therefore, since $1/d < p_d \leq B(d)$, one has $p_2 \leq t(B(d))$, and we see that it will suffice to show that
\begin{equation}\label{eq: to_check_1}
g(d) \left( 1 - \left( \frac{1}{d}\right)^{\frac{1}{2d-1}} \right)^{-1} < 1
\end{equation}
and
\begin{equation}\label{eq: to_check_2}
\frac{1}{1-g(d)} \left( t(B(d))  - \frac{1}{d^2} + \frac{1}{d} \left( 1- \frac{1}{d} \right) (d (1-g(d))^{2d-1}-1)^{-1}\right) < 1.
\end{equation}
Table~\ref{table: value of left sides} shows computed values of the left sides of these inequalities. Their values drop below 1 between dimensions 9 and 10. 
\end{proof}

\bigskip
\noindent
{\bf Acknowledgements.} The research of M. D. is supported by an NSF CAREER grant. The research of C. M. N. is supported by NSF grant DMS-1507019. The authors thank an anonymous referee of a previous version of the paper for their helpful comments and suggestions, in particular for pointing out the relation to frozen percolation models, and for suggesting the improved bound in inequality \eqref{eq: new_edge_bound}.

\begin{table}[h]
\caption{The values of the left sides of \eqref{eq: to_check_1} and \eqref{eq: to_check_2}.  The maximum of the two values drops below 1 between $d=9$ and 10. Because both inequalities hold for $10 \leq d \leq 18$, one has $p_c(d) < p_{shield}(d)$ for these $d$. (Values computed using Mathematica. Those for $d \geq 15$ were computed by first applying Stirling's approximation in the definitions of $g(d)$ and $B(d)$ to save computer time.) \medskip}   \label{table: value of left sides}
\centering % centering table
\begin{tabular}{l |  c   c } % creating 5 columns
\hline\hline % inserting double-line
 $d$ & LHS of \eqref{eq: to_check_1}  & LHS of \eqref{eq: to_check_2}
\\ [0.5ex]
\hline % inserts single-line
% Entering 1st row
 9 &  0.953 734 5     &  1.545 555 %2.038 305 9   
\\[-1ex]
\\

% Entering 1st row
 10 & 0.897 595 0     & 0.943 856 % 1.178 003 9  
  \\[-1ex]
\\

% Entering 1st row
 11  & 0.855 878 5    & 0.697 538 % 0.867 982 9
  \\[-1ex]
\\
% Entering 2nd row
12  & 0.822 865 5  &  0.545 351 % 0.668 997   4  
 \\[-1ex]
\\
% Entering 3rd row
13 &  0.795 549 3  & 0.443 074 %  0.529 081 0 
\\[-1ex]
\\
% Entering 4rd row
14 & 0.772 244 9 &  0.371 047  % 0.430 773 4  
  \\[-1ex]
\\
% Entering 5th row
15 & 0.751 938 7  &   0.337 635 % 0.361 897 7  
\\[-1ex]
\\
% Entering 6th row
16 &  0.733 976 5     &    0.293 250      %0.312 899 7   
  \\[-1ex]
\\
% Entering 6th row
17  &   0.717 908 0  &    0.260 608   % 0.276 975 5 
\\[-1ex]
\\
% Entering 6th row
18 &   0.703 406 0   & 0.235 671 %0.249 603 7
  \\[-1ex]
\\
% [1ex] adds vertical space
\hline % inserts single-line
\end{tabular}
\end{table}

\appendix

\section{Numerical results}
If we use numerical values of $p_c$, the result can be reduced to $d=7$. In other words, we can show that $p_{shield}(d) > p_c(d)$ for $d \geq 7$. The second column of Table~\ref{numerical result} shows numerical values of $p_c  = p_c^{bond}$ for dimensions 5-9. The third column gives lower bounds for $p_{shield}(d)$ for these dimensions. The fourth gives the maximum of the left sides of \eqref{eq: to_check_1} and \eqref{eq: to_check_2} when setting $p$ equal to the value in the third column. Because this maximum is $<1$, it shows that the value in the second column is indeed a lower bound for $p_{shield}$. One can see that the lower bound for $p_{shield}$ is larger than the value of $p_c$ for dimensions 7-9. 
%In Table 2, we used $\hat{B}(d)$ as the upper bound of $p_d$ using \eqref{eq: tau_upper_1}, \eqref{eq: tau_upper_6} (without using Stirling's approximation):
%
%
%\begin{align*}
%p_d & \leq \hat B(d) := \frac{1}{d} + \left( 1- \frac{1}{d} \right) \frac{1}{d^2} + \frac{1}{d^2} \left( \frac{3d-4}{d^2} \right) \left( 1-\frac{1}{d} \right) \\
%&+ \frac{1}{d^2} \left[ \left( \frac{3d-4}{d^2} \right)^2 + \left( \frac{d^2-3d+3}{d^2} \right)\left( \frac{4}{d^2} \right) \right] \left( 1-\frac{1}{d} \right) +  \sum_{k=5}^d \frac{k!}{d^k} + \sum_{j=1}^\infty \left( \frac{1}{d}\right)^{jd-1} \frac{(jd)!}{(j!)^d},
%\end{align*}
%and
%\begin{align*}
% p_c(d) &  \leq \hat g(d) := \frac{1}{d} + \frac{1}{d^3}-\frac{1}{d^4} +  \frac{1}{d^3} \left( \frac{3d-4}{d^2} \right) \left( 1- \frac{1}{d} \right) \\
%&+ \frac{1}{d^3} \left[ \left( \frac{3d-4}{d^2} \right)^2 + \left( \frac{d^2-3d+3}{d^2}\right) \left( \frac{4}{d^2} \right) + \left( \frac{1}{d^2} \right) \left( 1-\frac{1}{d} \right) \right] \left( 1-\frac{1}{d} \right) \\
%&+\dsum_{k=5}^d \frac{k!}{d^{k+1}}  + \sum_{j=1}^\infty \left( \frac{1}{d} \right)^{jd} \frac{(jd)!}{(j!)^d}
%\end{align*}

%That is, the lower bound of $p_{shield}(d)$ in Table~\ref{numerical result} is the maximum value of $p(d)$ which satisfies both of the following:
%\[
%p(d) \left( 1 - \left( \frac{1}{d}\right)^{\frac{1}{2d-1}} \right)^{-1} < 1
%\]
%and
%\[
%\frac{1}{1-p(d)} \left( t(\hat B(d))  - \frac{1}{d^2} + \frac{1}{d} \left( 1- \frac{1}{d} \right) (d (1-p(d))^{2d-1}-1)^{-1}\right) < 1.
%\]

\begin{table}[h] 
\caption{Numerical values of $p_c=p_c^{bond}$ and lower bounds for $p_{shield}$. The top numerical value of $p_c$ comes from \cite{MR1976824} and the bottom value comes from \cite{2018arXiv180608067M}. The fourth column is the maximum of the left sides of the first and second conditions in Theorem~\ref{lowerbound of shield} when $p$ is set equal to the lower bound for $p_{shield}$, which is the value in the third column. The value in the third column increases above that in the second between $d=6$ and 7. (Data computed using Mathematica.) \bigskip}     \label{numerical result}
\centering % centering table
\begin{tabular}{l c  c c } % creating 5 columns
\hline\hline % inserting double-line
 $d$ & $p_c^{bond}$  & lower bound of $p_{shield}$ & max. of left sides in  Thm.~\ref{lowerbound of shield}
\\ [0.5ex]
\hline % inserts single-line

% Entering 1st row
&  0.118 171 8 &   \\[-1ex]

\raisebox{1.5ex}{5} & \raisebox{0.5ex}{ 0.118 171 5 }       & \raisebox{1.5ex}{0.020 681 5 } &   \raisebox{1.5ex}{0.999 99} \\[1ex]

%   & \raisebox{1.5ex}{0.012726} &   \raisebox{1.5ex}{0.99984} \\[1ex]

% Entering 2nd row
&  0.094 201 9 &   \\[-1ex]
\raisebox{1.5ex}{6} & \raisebox{0.5ex}{  0.094 201 6 }       & \raisebox{1.5ex}{0.053 237 0} &   \raisebox{1.5ex}{0.999 99} \\[1ex]

%   & \raisebox{1.5ex}{0.034893} &   \raisebox{1.5ex}{0.999997} \\[1ex]

% Entering 3rd row
&  0.078 675 2  &     \\[-1ex]
\raisebox{1.5ex}{7} & \raisebox{0.5ex}{ 0.078 675 2 }      & \raisebox{1.5ex}{0.081 242 1} &   \raisebox{1.5ex}{0.999 99} \\[1ex]

%& \raisebox{1.5ex}{0.059902} &   \raisebox{1.5ex}{0.999998} \\[1ex]

% Entering 4rd row
&  0.067 708 3 &     \\[-1ex]
\raisebox{1.5ex}{8} & \raisebox{0.5ex}{  0.067 708 4 }     & \raisebox{1.5ex}{0.098 080 4}  &   \raisebox{1.5ex}{0.999 99} \\[1ex]

% & \raisebox{1.5ex}{0.083526}  &   \raisebox{1.5ex}{0.999994} \\[1ex]

% Entering 5th row
& 0.059 496 0   &     \\[-1ex]
\raisebox{1.5ex}{9} & \raisebox{0.5ex}{   0.059 496 0 }    & \raisebox{1.5ex}{0.103 788 9} &   \raisebox{1.5ex}{0.999 84}\\[1ex]

%   & \raisebox{1.5ex}{0.097006} &   \raisebox{1.5ex}{0.999993}\\[1ex]

% Entering 5th row
%& 0.053 092 5   &     \\[-1ex]
% \raisebox{1.5ex}{10} & \raisebox{0.5ex}{0.053 092 5}     & \raisebox{1.5ex}{0.100445} &   \raisebox{1.5ex}{0.999985}\\[1ex]
% Entering 5th row

% [1ex] adds vertical space
\hline % inserts single-line
\end{tabular}
\end{table}

\newpage

\bibliographystyle{amsplain}
\bibliography{ref}

\clearpage

\end{document}